\newtheorem{theorem}{Theorem}[section]
\newtheorem{lemma}[theorem]{Lemma}
\theoremstyle{definition}
\newtheorem{definition}[theorem]{Definition}
\newtheorem{example}[theorem]{Example}
\newtheorem{proposition}[theorem]{Proposition}
\newtheorem*{maintheorem}{Main Theorem}
\theoremstyle{remark}
\numberwithin{equation}{section}
\newcommand{\ds}{\displaystyle}
\newcommand{\Z}{\mathbb{Z}}
\newcommand{\mf}[1]{\underline{#1}}
\newcommand{\B}[2]{\underline{B}_{#1,#2}}
\newcommand{\dcurve}[1]{\ar@/_1pc/[d]_{#1}}
\newcommand{\ucurve}[1]{\ar@/_1pc/[u]_{#1}}
\begin{document}

\title{The slices of $S^n \wedge H\mf{\Z}$ for cyclic
    $p$-groups}

%    Information for first author
\author{Carolyn Yarnall}
%    Address of record for the research reported here
\address{Department of Mathematics, University of Kentucky, Lexington, KY 40506}
%    Current address
%\curraddr{Department of Mathematics and Statistics,
%Case Western Reserve University, Cleveland, Ohio 43403}%
%\email{yarnallc@wabash.edu}
%    \thanks will become a 1st page footnote.
%\thanks{The first author was supported in part by NSF Grant \#000000.}

%    Information for second author
% \author{Author Two}
% \address{Mathematical Research Section, School of Mathematical Sciences,
% Australian National University, Canberra ACT 2601, Australia}
% \email{two@maths.univ.edu.au}
% \thanks{Support information for the second author.}

%    General info
%\subjclass{Primary 54C40, 14E20; Secondary 46E25, 20C20}
%\date{January 1, 1994 and, in revised form, June 22, 1994.}

%\dedicatory{This paper is dedicated to our advisors.}

%\keywords{Differential geometry, algebraic geometry}

\begin{abstract}
The slice filtration is a filtration of equivariant spectra. While the tower is analogous to the
Postnikov tower in the nonequivariant setting, complete slice towers
are known for relatively few $G$-spectra. In this paper, we determine the slice tower for all $G$-spectra of the form $S^n \wedge H\mf{\Z}$ where $n\geq 0$ and $G$ is a cyclic $p$-group for $p$ an odd prime.
\end{abstract}

\maketitle

\section{Introduction}

The slice filtration is a filtration of equivariant spectra developed
by Hill, Hopkins, and Ravenel in their solution to the Kervaire
invariant one problem \cite{HHR}. It is a generalization of Dugger's
$C_2$-equivariant filtration \cite{Dugger} and is modeled on the
motivic filtration of
Voevodsky \cite{Voevodsky}. While it is an equivariant analogue of the Postnikov tower,
there are some marked differences. These differences arise from the fact that we use representation spheres rather than ordinary spheres in the construction. If $V$ is a $G$-representation, a representation sphere $S^V$ is the one-point compactification of the associated representation space. 

\begin{definition}
For each integer $n$, let $\tau_{\geq n}$ denote the localizing subcategory of $G$-spectra of the form $G_+ \wedge_H S^{m\rho_H-\epsilon}$ where $H$ ranges over all subgroups of $G$, $\rho_H$ denotes the regular representation of $H$, $\epsilon = 0,1$, and the dimension of the underlying sphere is $\geq n$.
\end{definition}

We think of a localizing subcategory as the category of acyclics for a localization functor on $G$-spectra. More precisely we mean a full subcategory that is closed under weak equivalences, cofibers, extensions, retracts, wedges, and directed colimits. The localizing subcategory $\tau_{\geq n}$ is the collection of slice cells used to construct the slice tower.

\begin{definition}
Let $X$ be a $G$-spectrum. The \emph{slice tower} of $X$ is the tower consisting of the following data:
\begin{enumerate}
\item Stages $P^n(X)$ where $P^n(-)$ denotes the localization functor associated to $\tau_{\geq n}$.
\item Natural maps $P^n(X) \rightarrow P^{n-1}(X)$ with fiber denoted
  $P^n_n(X)$ and referred to as the \emph{$n$th slice} of $X$.
\end{enumerate}
\end{definition}

These definitions are given in \cite{ESP} and are equivalent to the original definitions from \cite{HHR}.
From them we see that the construction resembles that of the Postnikov tower but differences arise due to the nature of spectra in $\tau_{\geq n}$ used. Most notably, these spectra are often less connected than ordinary spheres. For example, the $C_2$-representation sphere $S^{\rho_2 - 1}$ is a slice-cell in $\tau_{\geq 0}$. However, the map $S^0 \hookrightarrow S^{\rho_2 - 1}$ is the inclusion of fixed points and thus the homotopy in dimension $0$ is not trivial. In general, objects in $\tau_{\geq n}$ are often less than $n$ connected and hence lower homotopy groups may be affected in forming each stage $P^n(X)$.
Thus, unlike the fibers from the Postnikov tower, slices
need not be Eilenberg-MacLane spectra.

As a result, the slice tower can be difficult to compute and thus far has been determined for relatively few
genuine $G$-spectra. Hill, Hopkins, and Ravenel have
determined the complete slice filtration for
$MU_{(C_{2^k})}$ \cite{HHR}. The slice tower for certain
Eilenberg-MacLane spectra was given by Hill in \cite{ESP} and for more
general Eilenberg-MacLane spectra by Ullman in
\cite{Ullman}. As the slice tower does not commute with all 
suspensions, knowing the towers for $H\mf{M}$ does not immediately
give us the towers for $S^V \wedge H\mf{M}$ for representations
$V$ or even $S^n \wedge H\mf{\Z}$. On this front, Hill, Hopkins, and Ravenel have
computed the slice tower for $S^V \wedge H\mf{\Z}$ where $V$ is a multiple of a particular nontrivial subrepresentation of $\rho_{C_{p^k}}$ \cite{RO(G) slices}. What we present here is a different
flavor of the work in \cite{RO(G) slices}, namely, the positive
integer-graded suspensions of $H\mf{\Z}$ for cyclic $p$-groups where $p$ is an odd prime.

\begin{maintheorem}
The slice tower for $S^n \wedge H\mf{\Z}$ for $G = C_{p^k}$ where $p$ is an odd prime consists of the following data:
\begin{itemize}
\item The slice sections $P^m(S^n \wedge H\mf{\Z})$ are of the form $S^W \wedge H\mf{\Z}$ where $W$ is a $C_{p^k}$-representation of dimension $n$ and $n \leq m \leq (n-2)p^k - 1$.
\item The nontrivial slices $P^m_m(S^n \wedge H\mf{\Z})$ are of the form $S^V \wedge H\B{i}{j}$ where $\B{i}{j}$ is a $C_{p^k}$-Mackey functor, $V$ is a $C_{p^k}$-representation of dimension $m$, and $m$ takes on select values in the range $n \leq m \leq (n-2)p^k -1$. All other slices are trivial.
\end{itemize}
\end{maintheorem}

The exact objects that appear in this theorem will be defined along the way to the final statement of the theorem in Section 4. In Section 2  we provide a definition of the Mackey functors $\B{i}{j}$ appearing in the slices of our tower. In Section 3 we present the collection of spectra of the form $S^V \wedge H\B{i}{j}$ from the statement of our Main Theorem and the fiber sequences that comprise our tower. After a more precise statement of the main theorem in Section 4, we provide a few examples. The final section consists of proofs of the essential theorems from Section 3.

\section{Background: Mackey Functors}

The definitions in this section are given by Hill, Hopkins, and Ravenel in \cite{RO(G) slices}. We note that the Weyl action on these Mackey functors is trivial and thus we omit it from all definitions and discussions in this section.

First, we note that we will depict all Mackey functors using the diagrams of Lewis from \cite{Lewis}. Thus, if $\mf{M}$ is a $C_p$-Mackey functor, we will represent it as
\[\xymatrix{ \mf{M}(C_p/C_p) \ar@(l,l)[d]_{res} \\
              \mf{M}(C_p/e) \ar@(r,r)[u]_{tr}  } \] 

For the constant $C_{p^k}$-Mackey functor $\mf{\Z}$, all restriction maps are the identity and transfer maps are multiplication by $p$. The restriction and transfer maps roles are reversed for its dual, $\mf{\Z}^*$. We can make strategic exchanges of the restriction and transfer maps to obtain similar Macky functors.

\begin{definition}\label{Z(i,j)}
Let $G=C_{p^k}$ and $0 \leq j < i \leq k$ be integers. For each pair, let $\mf{\Z}(i,j)$ denote the Mackey functor with constant value $\Z$ for which the restriction and transfer maps are
\begin{center}
\begin{multicols}{2}

$res_{C_{p^m}}^{C_{p^{m+1}}} = \begin{cases} 1 & m < j \\ p & j \leq m < i \\ 1 & i \leq m \end{cases}$

$tr_{C_{p^m}}^{C_{p^{m+1}}} = \begin{cases} p & m<j \\ 1 & j \leq m < i \\ p & i \leq m \end{cases}$
\end{multicols}
\end{center}
\end{definition}

Notice that the restriction of $\mf{\Z}(i,j)$ to $C_{p^j}$ is the constant Mackey functor $\mf{\Z}$. If we allow $i = j$ then $\mf{\Z} = \mf{\Z}(i,i)$. We also note that the dual to the constant Mackey functor $\mf{\Z}^*$ is equivalent to $\mf{\Z}(k,0)$.

\begin{example}
  For $G = C_{p^2}$, we have the following $\mf{\Z}(i,j)$:
\[\xymatrix@R=.5pc { \mf{\Z}(2,1)  & & \mf{\Z}(2,0) & & \mf{\Z}(1,0) \\
              \Z \ar@(l,l)[dd]_p & & \Z \ar@(l,l)[dd]_p & & \Z \ar@(l,l)[dd]_1 \\
\\
              \Z \ar@(l,l)[dd]_1 \ar@(r,r)[uu]_1 & & \Z \ar@(l,l)[dd]_p \ar@(r,r)[uu]_1 & & \Z \ar@(l,l)[dd]_p \ar@(r,r)[uu]_p \\
\\
              \Z \ar@(r,r)[uu]_p & & \Z \ar@(r,r)[uu]_1 & & \Z \ar@(r,r)[uu]_1 } \]
\end{example}

We can define maps from $\mf{\Z}(i,j)$ to $\mf{\Z}$ by choosing the image of the element $1$ in $\mf{\Z}(i,j)(G/
e)$. From such maps we obtain the following Mackey functors. 

\begin{definition}\label{B(i,j)}
  Let $G = C_{p^k}$ and $1 \leq j < i \leq k$ be integers such that $i+j \leq k$. Let $\B{i}{j}$ denote the quotient Mackey functor associated to the inclusion $\mf{\Z}(i+j,j) \rightarrow \mf{\Z}$. That is, the $C_{p^k}$-Mackey functor $\B{i}{j}$  is defined as follows: $$ \B{i}{j}(C_{p^k}/C_{p^m}) = \begin{cases} \Z/p^{i} & m \geq i + j \\
  \Z/p^{m-j} & j < m < i + j \\
  0 & m \leq j \end{cases} $$  
where all restriction maps are the canonical quotients $q$ and transfer maps are multiplication by p. \end{definition}

\begin{example}
  For $G = C_{p^2}$, we have the following $\B{i}{j}$:
\[\xymatrix @R=.5pc{ \B{2}{0} & & \B{1}{0} & & \B{1}{1}\\
              \Z/p^2 \ar@(l,l)[dd]_q & & \Z/p \ar@(l,l)[dd]_1 & & \Z/p \ar@(l,l)[dd] \\
\\
              \Z/p \ar@(l,l)[dd] \ar@(r,r)[uu]_p & & \Z/p \ar@(l,l)[dd] \ar@(r,r)[uu]_p & & 0 \ar@(l,l)[dd] \ar@(r,r)[uu] \\
\\
              0 \ar@(r,r)[uu] & & 0 \ar@(r,r)[uu] & &  0 \ar@(r,r)[uu] } \]
\end{example}

\section{Slice Tower Objects}
In order to show that a tower is the slice tower for a
$G$-spectrum $X$, by Proposition 4.45 in \cite{HHR} it is sufficient
to show that the limit of the tower is $X$, the colimit of the tower
is contractible, and the $m$th fiber is an $m$-slice for all $m$. The
towers that we present for $S^n \wedge H\mf{\Z}$ are finite so the
 interesting work amounts to determinining the fiber sequences
that make up the tower and showing that each of these fibers is $m$-slice.

\subsection{Slices}

We will now define a collection of spectra that are $m$-slice for certain values of $m$. In Section 4 we will see that these are precisely the nontrivial slices in the tower for $S^n \wedge H\mf{\Z}$. For the remainder of this section, we assume $n\geq 3$.  

Our spectra will all be of the form $S^V \wedge H\mf{M}$ where $V$ is a sum of subrepresentations of the regular represenation $\rho_G$. When $G = C_{p^k}$ we will often write $\rho_G$ as $\rho_{p^k}$. This representation has the decomposition:
\[ \rho_{p^k} = 1 + \bigoplus_{j=1}^{\frac{p^k-1}{2}} \lambda(j)\]
where $\lambda(j)$ is the composition of the inclusion of the $p^k$th roots of
unity with a degree $j$ map on $S^1$. All objects in our work are considered $p$-locally and thus
$S^{\lambda(i)}$ and $S^{\lambda(j)}$ are equivalent whenever the $p$-adic orders of $i$ and $j$ agree. Thus, we need only keep track of
$\lambda(p^j) = \lambda_j$. Working this way, we have an equivalent
decomposition of $\rho_{p^k}$:
\[ \rho_{p^k} = 1 + \frac{p-1}{2} \lambda_{k-1} +
\frac{p(p-1)}{2}\lambda_{k-2} + \cdots + \frac{p^{k-1}(p-1)}{2}\lambda_0\]

Let $d$ denote the number of integers of the same parity as $n$ that lie between $\frac{n}{p}$ and $n-2$, inclusive. Via a simple counting argument we obtain a useful way of writing $d$: \[d = \frac{1}{2}\left(n-\Big(\frac{n-n_0}{p}\Big)-\delta\right)\] where $n_0$ is the residue of $n$ modulo $p$ and \[ \delta = \begin{cases} 2 & \mbox{if } n_0 \mbox{ is even}\\ 
                              1 & \mbox{if } n_0 \mbox{ is odd}\\
                              0 & \mbox{if } n_0 = 0 \end{cases}\]
We label the integers between $\frac{n}{p}$ and $n-2$ that are of the same parity as $n$, $m_1 < m_2 < \dots < m_d$ so $m_i = n - 2d + 2i - 2$.

\begin{definition}
\label{V(a,b)}
  Let $a$, $b$ be integers such that $1 \leq a \leq k$, $1 \leq b \leq d$. We use $V_{(a,b)}(n)$ to denote the $G$-representation:
\[V_{(a,b)}(n) = (n-2)\rho_G - 1 - \bigoplus_{i=1}^{\ell} \lambda(i)\]
where \[ \ell = \tfrac{1}{2}\big((n-2)p^k-m_bp^a\big). \]

\noindent We write $V_{(a,b)}$ whenever $n$ is clear from the context.
\end{definition}

It is immediate from the definition that the dimension of $V_{(a,b)}(n)$ is $m_bp^a-1$. We use the representations $V_{(a,b)}$ and the Mackey functors $\B{i}{j}$ from Definition \ref{B(i,j)} to form a collection of $C_{p^k}$-spectra that we later show are slices. In all that follows, we will let $\nu(b) = \min\{\nu_p(m_b) , k-a \}$ where $\nu_p(m_b)$ is the
  $p$-adic order of $m_b$. We write $\nu$ when $b$ is clear from context.

A proof of the following result may be found in Section 5.

\begin{restatable}{theorem}{SliceV}
\label{Main Slice Theorem}
  For all $1\leq a \leq k$ and $1 \leq b \leq d$, the $C_{p^k}$-spectrum \[S^{V_{(a,b)}} \wedge  H\B{\nu+1}{a-1}\] is an $(m_bp^a-1)$-slice.
\end{restatable}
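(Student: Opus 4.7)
The plan is to verify both defining conditions for $X := S^{V_{(a,b)}} \wedge H\B{\nu+1}{a-1}$ to be an $m$-slice with $m = m_bp^a - 1$: first that $X$ lies in $\tau_{\geq m}$, and then that $X \in \tau_{\leq m}$ (equivalently, $P^{m-1}X \simeq *$). The dimension count $|V_{(a,b)}| = m_bp^a - 1$ is immediate from Definition \ref{V(a,b)}, so the underlying sphere carries the correct topological dimension; the real content is ensuring that the $G$-equivariant structure interacts correctly with the slice cells at every subgroup.

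For the lower bound, I would exploit the defining short exact sequence of Mackey functors
\[ 0 \to \mf{\Z}(\nu+a,\, a-1) \to \mf{\Z} \to \B{\nu+1}{a-1} \to 0 \]
from Definition \ref{B(i,j)}, which yields a cofiber sequence after smashing with $S^{V_{(a,b)}} \wedge H(-)$. Since $\tau_{\geq m}$ is a localizing subcategory, it suffices to produce the other two terms in $\tau_{\geq m}$. Writing $V_{(a,b)} = (n-2)\rho_G - 1 - \bigoplus \lambda(i)$ and using the $p$-local decomposition of $\rho_{p^k}$, this reduces to a claim about representation-graded suspensions of $H\mf{\Z}$ and $H\mf{\Z}(\nu+a, a-1)$, for which cellular filtrations of $S^{\lambda_j}$ by slice cells together with the slice-tower results of Hill \cite{ESP} and Ullman \cite{Ullman} apply.

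For the upper bound, I need to show $[G_+ \wedge_H S^{t\rho_H - \epsilon}, X] = 0$ for every subgroup $H \leq C_{p^k}$ and every integer $t$ with $|t\rho_H - \epsilon| > m$. By Wirthm\"uller adjunction this translates into the vanishing of a specific $RO(H)$-graded homotopy group of $i_H^*(S^{V_{(a,b)}} \wedge H\B{\nu+1}{a-1})$. The restriction of $H\B{\nu+1}{a-1}$ to $C_{p^j}$ is zero for $j \leq a-1$ and, for $j \geq a-1$, takes an analogous quotient form for the smaller group, inviting an induction on $k$. The calibrated value $\nu = \min\{\nu_p(m_b),\, k-a\}$ ensures that the fixed-point dimensions of $V_{(a,b)}$ at each $C_{p^j}$ align precisely with the levels at which $\B{\nu+1}{a-1}$ vanishes or stabilizes, so the required vanishings can be checked subgroup by subgroup.

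The step I expect to be the main obstacle is the upper bound: coordinating the fixed-point dimensions of $V_{(a,b)}$ (determined by the specific value $\ell = \tfrac{1}{2}((n-2)p^k - m_bp^a)$) with the restriction and transfer data of $\B{\nu+1}{a-1}$ across all subgroups. The constraint $i+j \leq k$ built into $\B{i}{j}$, together with the dichotomy $\nu_p(m_b) < k-a$ versus $\nu_p(m_b) \geq k-a$ controlling $\nu$, will likely force a bifurcated case analysis, and it is precisely in this analysis that the engineered choices of $V_{(a,b)}$ and $\B{\nu+1}{a-1}$ pay off.
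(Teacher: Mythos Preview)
Your outline has the right two-part shape (show $\geq m$ and $\leq m$), but it diverges from the paper's argument and, more importantly, omits the mechanisms that actually close each half.

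For the lower bound, you route through the short exact sequence of Mackey functors and try to push both $S^{V_{(a,b)}}\wedge H\mf{\Z}$ and $S^{V_{(a,b)}}\wedge H\mf{\Z}(\nu+a,a-1)$ into $\tau_{\geq m}$. That is not obviously easier: each of those is a representation-sphere suspension of an $H\mf{\Z}$-type spectrum of the \emph{same} dimension, so you have traded one problem for two of the same kind. The paper does not use this cofiber sequence at all for $\geq m$. Instead it invokes a direct criterion (its Theorem~\ref{SV are slice}, built on Proposition~\ref{V geq n}): one only needs to exhibit $V_{(a,b)}\subset (n-2-2\ell_k)\rho_G-1$ with matching $G$-fixed points, which follows quickly once $\ell$ is written as $\ell_k p^k + L$ with $0\le L<p^k$.

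For the upper bound, your plan to ``check subgroup by subgroup via Wirthm\"uller and induct on $k$'' is too coarse and misses the decisive step. The paper does not run a full subgroup-by-subgroup vanishing; it proves a single homological injection (Lemma~\ref{Hom res}) showing that, once $V_{(a,b)}-t\rho_G$ has only nonpositive cells, the restriction
\[
\mf{H_{-\epsilon}}(S^{V_{(a,b)}-t\rho_G};\B{\nu+1}{a-1})(G/G)\;\hookrightarrow\;\mf{H_{-\epsilon}}(S^{V_{(a,b)}-t\rho_G};\B{\nu+1}{a-1})(G/C_{p^{\nu+a}})
\]
is injective. This reduces everything to the \emph{single} subgroup $H=C_{p^{\nu+a}}$, and there the computation collapses because $i_H^\ast V_{(a,b)} = \tfrac{m_b}{p^{\nu}}\rho_H - 1$ is exactly a regular-representation slice cell, so the target vanishes outright. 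Your proposal neither isolates this injection lemma nor the crucial identification of $i_H^\ast V_{(a,b)}$; without them the ``subgroup-by-subgroup'' check has no engine. Finally, the case split you anticipate (on whether $\nu_p(m_b)<k-a$) is not the one that appears: the paper's only bifurcation is on the size of the residue $L$ relative to $\tfrac{p^k-1}{2}$, used to verify that $V_{(a,b)}-t\rho_G$ has no positive cells.
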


We will see that the spectra in the theorem above make up all but one of the nontrivial slices in the tower for $S^n \wedge H\mf{\Z}$. The $n$-slice is of a slightly different form and moreover, its definition depends on whether $n$ is a multiple of $p$.

\begin{definition}\label{W and W'}
If $n$ not a multiple of $p$, let $W(n)$ be the $C_{p^k}$-representation 
\[W(n) = V_{(1,1)} + 1 - \bigoplus_{i=1}^{\frac{1}{2}(m_1p-n)}\lambda(i).\]

If $n$ is a multiple of $p$, let $W'(n)$ be the $C_{p^k}$-representation
\[ W'(n) = V_{(1,2)} + 1 - \bigoplus_{i=1}^{p-1}\lambda(i) -
\lambda(1).\]
\end{definition}

We write $W$ and $W'$ in place of $W(n)$ and $W'(n)$ respectively when $n$ is clear from the context. A straightforward computation gives that the dimensions of $W$ and $W'$ are both $n$. 

A proof of the following result may be found in Section 5.

\begin{restatable}{theorem}{SliceW}\label{n-slice 1}
  $\ds S^W \wedge H\mf{\Z}$ and $\ds S^{W'} \wedge H\mf{\Z}$ are both $n$-slices.
\end{restatable}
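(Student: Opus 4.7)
The plan is to verify directly the two defining properties of an $n$-slice for each of $S^W \wedge H\underline{\mathbb{Z}}$ and $S^{W'} \wedge H\underline{\mathbb{Z}}$: membership in $\tau_{\geq n}$ and in $\tau_{\leq n}$. Since the dimensions of $W$ and $W'$ are both $n$, this should amount to checking that $W$ and $W'$ are sufficiently ``regular'' at every subgroup, together with an upper-bound vanishing computation coming from the structure of $H\underline{\mathbb{Z}}$.

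For the $\tau_{\geq n}$ half, I would apply the criterion (essentially HHR Corollary~4.50, or its refinement in \cite{Ullman}) that $S^V \wedge H\underline{\mathbb{Z}}$ lies in $\tau_{\geq \dim V}$ whenever $V$ is pure, i.e.\ $\dim V^{C_{p^j}} \geq \dim V / p^{k-j}$ for every subgroup. I would verify this by computing fixed-point dimensions directly, using the decomposition $\rho_{p^k} = 1 + \sum_j \tfrac{p^{k-j-1}(p-1)}{2}\lambda_j$ and the rule that $\lambda(i)^{C_{p^j}}$ is $2$-dimensional when $p^{k-j} \mid i$ and $0$-dimensional otherwise. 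The $\lambda(i)$'s subtracted in the definitions of $W$ and $W'$ have the smallest indices, so they remove dimensions only at the top stratum (the full group $C_{p^k}$), leaving the fixed-point counts at every proper subgroup equal to those of $V_{(1,1)} + 1$ (resp.\ $V_{(1,2)} + 1$). Combined with $\dim W = \dim W' = n$, this yields the required inequalities at every level.

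For the $\tau_{\leq n}$ half, I would show that for each subgroup $H$ with $m \cdot |G/H| - \epsilon > n$, the group $[G_+ \wedge_H S^{m\rho_H - \epsilon},\, S^W \wedge H\underline{\mathbb{Z}}]^G$ vanishes. Via the $G$-adjunction this becomes a computation of $RO(G)$-graded homotopy of $H\underline{\mathbb{Z}}$ at specific virtual representations built out of $W$ (resp.\ $W'$) and $\rho_H$. I would compute these groups by inducting up the subgroup lattice, using the isotropy cofibre sequences $S(\lambda_j)_+ \to S^0 \to S^{\lambda_j}$ to reduce to the known cell structure of representation spheres over $H\underline{\mathbb{Z}}$ established in \cite{ESP} and \cite{HHR}. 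The precise choice of $W$ and $W'$ was arranged so that these homotopy groups vanish just above the boundary; in particular the extra $\lambda(1)$ summand subtracted in $W'$ is exactly the correction needed to kill a would-be $(n+1)$-dimensional class that appears when $p \mid n$.

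The main obstacle is the $\tau_{\leq n}$ step: while the fixed-point inequalities for $\tau_{\geq n}$ reduce to arithmetic bookkeeping driven by the combinatorial definition of $d$ and the numbers $m_i = n - 2d + 2i - 2$, the upper-bound vanishing must be checked at every slice cell of dimension strictly greater than $n$, across every subgroup, and the check is sharp enough that the statement bifurcates according to whether $p \mid n$. The three-way case split in $\delta$ and the choice to begin with $V_{(1,2)}$ rather than $V_{(1,1)}$ in the divisible case both exist precisely to make this boundary computation hold; identifying where each term is forced (and verifying that no further subtraction is needed) is where I expect the real work to lie.
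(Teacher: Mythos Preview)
Your two-step strategy---verify $\tau_{\geq n}$ via a representation-theoretic condition, then $\tau_{\leq n}$ via vanishing of maps from slice cells---matches the paper's. However, you are missing the key simplification for the $\leq n$ half: the periodicity $W(n) + \rho_G = W(n+p^k)$ (and likewise for $W'$), proved as Lemma~\ref{W + pk}, which reduces the problem to $2 < n \leq p^k$. After this reduction and an even/odd case split, the paper writes $W$ explicitly as $2\rho_G$ or $\rho_G$ minus a short sum of $\lambda(i)$'s, so that for the relevant $t$ the virtual representation $W - t\rho_G$ has only nonpositive cells and the $G$-fixed chain complex of $S^{W - t\rho_G}$ is short enough to read off the required vanishing by inspection via Theorem~\ref{SV are slice}. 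Your proposed isotropy-cofibre-sequence induction could in principle be made to work, but it is heavier than what is needed once the periodicity is in hand; without that reduction, handling arbitrary $n$ directly is genuinely harder, and your sketch gives no indication of how you would control the growing number of $\lambda(i)$ summands.

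There is also an error in your $\geq n$ sketch. The $\lambda(i)$'s subtracted in passing from $V_{(1,1)}+1$ to $W$ (and from $V_{(1,2)}+1$ to $W'$) all have $i < p$, hence are equivalent to $\lambda_0$, which has \emph{no} nonzero fixed points under any nontrivial subgroup. Subtracting them therefore leaves every $C_{p^j}$-fixed-point dimension for $j \geq 1$ unchanged and only lowers the underlying dimension---the opposite of ``removes dimensions only at the top stratum.'' (Your stated rule $p^{k-j}\mid i$ for $\lambda(i)^{C_{p^j}}\neq 0$ is also inverted; the correct condition is $p^{j}\mid i$.) The paper does not use your purity inequality directly; it instead exhibits $W$ as a subrepresentation of some $m\rho_G$ with $W^G = (m\rho_G)^G$ and invokes the inductive criterion of Proposition~\ref{V geq n} through Theorem~\ref{SV are slice}. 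Once your fixed-point bookkeeping is corrected the two approaches to $\geq n$ are morally the same, but the concrete check the paper performs is containment-with-matching-$G$-fixed-points, not a bare fixed-point-dimension inequality at every subgroup.
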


\subsection{Fiber Sequences}

We will use the slices given in Theorem \ref{Main Slice Theorem} and the maps defined in this section to construct the slice tower of $S^n \wedge H\mf{\Z}$. In the following lemma and proof, we write $S(V, \mf{M})$ in place of $S^V \wedge H\mf{M}$.

\begin{lemma}
\label{Fiber Sequence} 
Let $1 \leq a \leq k$, $1 \leq b \leq d$, and $\widetilde{V}_{(a,b)} = V_{(a,b)} - U$ for any representation
$U$ that consists of a sum of representations of the form $\lambda_r$
for any $r \leq a-1$. Then the $C_{p^k}$-spectrum
$S(V_{(a,b)},\B{\nu+1}{a-1})$ is the fiber of any map of the form
\[ \xymatrix{  S(\widetilde{V}_{(a,b)} +1 +\lambda_{\nu+a},\mf{\Z})
  \ar[r] & S(\widetilde{V}_{(a,b)} +1 + \lambda_{a-1},\mf{\Z}). } \]

\end{lemma}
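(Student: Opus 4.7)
The plan is to construct the desired fiber sequence from the defining short exact sequence of $\B{\nu+1}{a-1}$ smashed with an appropriate representation sphere, and then to use an invariance property of $H\B{\nu+1}{a-1}$ to absorb the choice of the auxiliary representation $U$. By Definition \ref{B(i,j)} there is a short exact sequence of $C_{p^k}$-Mackey functors
\[
0 \to \mf{\Z}(\nu+a, a-1) \to \mf{\Z} \to \B{\nu+1}{a-1} \to 0,
\]
yielding a cofiber sequence $H\mf{\Z}(\nu+a, a-1) \to H\mf{\Z} \to H\B{\nu+1}{a-1}$ of Eilenberg-MacLane $C_{p^k}$-spectra. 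I would then invoke the $p$-local equivalence
\[
H\mf{\Z}(\nu+a, a-1) \simeq S^{\lambda_{\nu+a} - \lambda_{a-1}} \wedge H\mf{\Z},
\]
an $RO(C_{p^k})$-graded identification verifiable by direct comparison of Mackey functor homotopy groups. Smashing the resulting cofiber sequence with $S^{\widetilde{V}_{(a,b)} + 1 + \lambda_{a-1}}$ produces
\[
S(\widetilde{V}_{(a,b)} + 1 + \lambda_{\nu+a}, \mf{\Z}) \to S(\widetilde{V}_{(a,b)} + 1 + \lambda_{a-1}, \mf{\Z}) \to S^{\widetilde{V}_{(a,b)} + 1 + \lambda_{a-1}} \wedge H\B{\nu+1}{a-1},
\]
whose leftmost map is of the required form and whose fiber is $S^{\widetilde{V}_{(a,b)} + \lambda_{a-1}} \wedge H\B{\nu+1}{a-1}$.

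Next, I would establish the invariance claim that $S^{\lambda_r} \wedge H\B{\nu+1}{a-1} \simeq H\B{\nu+1}{a-1}$ for each $r \leq a-1$. Smashing the cofiber sequence of $C_{p^k}$-spaces $S(\lambda_r)_+ \to S^0 \to S^{\lambda_r}$ with $H\B{\nu+1}{a-1}$ reduces this to showing $S(\lambda_r)_+ \wedge H\B{\nu+1}{a-1}$ is contractible. Since $\lambda_r$ has kernel $C_{p^r}$, the unit sphere $S(\lambda_r)$ admits a $C_{p^k}$-CW structure built entirely from cells of orbit type $C_{p^k}/C_{p^r}$. By Definition \ref{B(i,j)}, $\B{\nu+1}{a-1}(C_{p^k}/C_{p^m}) = 0$ for all $m \leq a-1$, so the restriction $\B{\nu+1}{a-1}|_{C_{p^r}}$ is the zero $C_{p^r}$-Mackey functor, and each cell therefore contributes a contractible piece $(C_{p^k}/C_{p^r})_+ \wedge H\B{\nu+1}{a-1} \simeq \ast$.

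Iterating this invariance---once using $r = a-1$ to account for $\lambda_{a-1}$, and once per summand of $U$ to absorb $-U$---yields $S^{\lambda_{a-1} - U} \wedge H\B{\nu+1}{a-1} \simeq H\B{\nu+1}{a-1}$, whence the fiber identifies with $S^{V_{(a,b)}} \wedge H\B{\nu+1}{a-1} = S(V_{(a,b)}, \B{\nu+1}{a-1})$, as required. The main technical hurdle is the initial equivalence $H\mf{\Z}(\nu+a, a-1) \simeq S^{\lambda_{\nu+a} - \lambda_{a-1}} \wedge H\mf{\Z}$, which encodes genuine $RO(C_{p^k})$-graded information about $H\mf{\Z}$; once this is in hand, the remainder of the argument is a formal consequence of a cofiber sequence together with a cell-counting argument.
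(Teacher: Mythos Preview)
Your proof is correct and follows essentially the same route as the paper: both combine the defining short exact sequence for $\B{\nu+1}{a-1}$ with the equivalence $H\mf{\Z}(\nu+a,a-1)\simeq S^{\lambda_{\nu+a}-\lambda_{a-1}}\wedge H\mf{\Z}$ (which the paper cites as \cite[Theorem~3.1]{RO(G) slices}) and the invariance $S^{\lambda_r}\wedge H\B{\nu+1}{a-1}\simeq H\B{\nu+1}{a-1}$ for $r\leq a-1$. The only cosmetic difference is the order of operations---you smash directly with $S^{\widetilde{V}_{(a,b)}+1+\lambda_{a-1}}$ and then absorb $\lambda_{a-1}-U$ via the invariance, whereas the paper first treats $V_{(a,b)}$ and afterward observes that shifting by $U$ leaves the fiber unchanged.
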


\begin{proof}
Consider the fiber sequence
\[F \rightarrow H\mf{\Z}(i+j,j) \rightarrow H\mf{\Z}\]
where the map on the right is induced from the map from Definition \ref{B(i,j)}.
We then have the following long exact sequence
\[ 0 \rightarrow \mf{\pi_0(F)} \rightarrow \Z(i+j,j) \rightarrow \Z \rightarrow \mf{\pi_{-1}(F)} \rightarrow 0 \]
Thus by Definition \ref{B(i,j)} we obtain that $F = \Sigma^{-1}H\B{i}{j}$.

 From \cite[Theorem 3.1]{RO(G) slices} we have that
\[S^{\lambda_{i+j} - \lambda_j} \wedge H\mf{\Z} \simeq H\mf{\Z}(i+j,j).\]
Implementing this equivalence and smashing with $S^{1 + \lambda_j}$ yields
\[S^{\lambda_j}H\B{i}{j} \rightarrow S^{\lambda_{i+j}+1} \wedge H\mf{\Z} \rightarrow S^{\lambda_j+1}\wedge H\mf{\Z}.\]
Since $\B{i}{j}$ is $0$ on cells induced up from subgroups of $C_{p^r}$ for all $r \leq j$, the inclusion of the fixed point sphere $S^0 \hookrightarrow S^{\lambda_r}$ induces an equivalence \[H\B{i}{j} \simeq S^{\lambda_r} \wedge H\B{i}{j}\] for all $r \leq j$. Thus we have the sequence
\[ H\B{i}{j} \rightarrow S^{\lambda_{i+j}+1}\wedge H\mf{\Z} \rightarrow S^{\lambda_j+1} \wedge H\mf{\Z} \]
Smashing with $S^{V_{(a,b)}}$ and setting $i = \nu +1$ and $j = a-1$
gives that $S(V_{(a,b)},\B{\nu+1}{a-1})$ is the fiber of 
\[ S(V_{(a,b)}+1+ \lambda_{\nu+a},\mf{\Z}) \rightarrow S(V_{(a,b)} + 1
+ \lambda_{a-1}, \mf{\Z})\]
Lastly, since $H\B{\nu+a}{a-1}$ is equivalent to $S^{\lambda_r} \wedge
H\B{\nu+a}{a-1}$ for all $r \leq a-1$ shifting by representations $U$ consisting solely of such $\lambda_r$
will not change the fiber of the map and the result follows.
\end{proof}

\section{Main Result and Discussion}\label{Main Result}

We can now present the main result, namely, the entire slice tower for $S^n \wedge H\mf{\Z}$ and $G = C_{p^k}$ for odd primes $p$. Note that result below only holds for $n \geq 3$. The cases $n = 1, 2$ are addressed in Section 5.

\begin{maintheorem}\label{Main Result}
  The slice tower for $S^n \wedge H\mf{\Z}$ for $G=C_{p^k}$ for $p$ an odd prime consists of the following data:
  \begin{itemize}
	\item The slice sections $P^m(S^n \wedge H\mf{\Z})$ are of the form $S^V \wedge H\mf{\Z}$ for $n \leq m \leq (n-2)p^k -1$ and the $C_{p^k}$-representations $V$ are determined by the maps from \autoref{Fiber Sequence}.
  \item If $n$ is not divisible by $p$, the $(m_bp^a - 1)$-slices of $S^n \wedge H\mf{\Z}$ are the spectra
    $S^{V_{(a,b)}}\wedge H\B{\nu + 1}{a-1}$ for $1\leq a \leq k$ and
    $1\leq b \leq d$. The $n$-slice is $S^W \wedge H\mf{\Z}$.

  \item If $n$ is not divisible by $p$, the $(m_bp^a - 1)$-slices of $S^n \wedge H\mf{\Z}$ are the spectra
    $S^{V_{(a,b)}}\wedge H\B{\nu + 1}{a-1}$ for $1 \leq b \leq d$ when $2 \leq a \leq k$ and $2\leq b \leq d$ when $a=1$. The $n$-slice is $S^{W'} \wedge H\mf{\Z}$.
\end{itemize}
\noindent All other sections and slices are trivial.
  
\end{maintheorem}

\begin{proof}
  By Theorems \ref{Main Slice Theorem} and \ref{n-slice
    1} we know that the spectra listed in the statement above are slices. We also know that they are fibers
  of the maps from Lemma \ref{Fiber Sequence}. We now show that the
  maps from Lemma \ref{Fiber Sequence} fit together sequentially so that the
  spectra $S^{V_{(a,b)}} \wedge H\B{\nu+1}{a-1}$ are the fibers in the
  appropriate order. In other words, we show that for $1 \leq a \leq
  k$ and $1 \leq b \leq d-1$ there is a map from Lemma \ref{Fiber
    Sequence} with fiber $V_{(a,b+1)}$ that ends in a spectrum
  equivalent to the start of the map with fiber $V_{(a,b)}$. We will
  handle the case $b = d$ separately.

 Recall that $V_{(a,b)} = (n-2)\rho_G - 1
-\bigoplus_{i=1}^\ell\lambda(i)$ where $\ell =
\frac{1}{2}\big((n-2)p^k - m_bp^a\big)$. $V_{a,b+1}$ is defined
similarly but we will write $\ell' =
\frac{1}{2}\big((n-2)p^k-m_{b+1}p^a\big)$. Since $m_{b+1} = m_b+2$
a simple calculation gives that $\ell = \ell'+p^a$. Thus,
\[ V_{(a,b+1)} - V_{(a,b)} =  \bigoplus_{i = \ell' + 1}^{\ell' +
  p^a}\lambda(i) \]

Since $p^a|\ell'$ we know that $\nu_p(i) < p^a$ for all $\ell'+1
\leq j < \ell' + p^a$. Also, $\nu_p(\ell'+p^a) = \nu_p(\ell) = \nu(b)
+ a$. Thus 
\[V_{(a,b+1)} - V_{(a,b)} = U + \lambda_{\nu + a}\]
where $U$ consists solely of representations of the form $\lambda_r$
for $r \leq a-1$.

Now we show that a map from Lemma \ref{Fiber Sequence} with fiber
$V_{(a+1,1)}$ connects exactly to a map with fiber $V_{(a,d)}$. We write $\ell_{a,d}$ and $\ell_{a+1,1}$ in place of
$\ell$ from Definition \ref{V(a,b)} for clarity. More precisely,
\[\ell_{a,d} = \frac{1}{2}\big((n-2)p^k - (n-2)p^a\big)\]
and
\[\ell_{a+1,1} = \frac{1}{2}\big((n-2)p^k - (n-2d)p^{a+1}\big).\]
Thus,
\begin{align*} \ell_{a,d}-\ell_{a+1,1} & = \frac{(n-2d)p^{a+1}}{2} -
  \frac{(n-2)p^a}{2}  = \frac{p^a}{2}\big((n-2d)p - (n-2)\big) \\
& = \frac{p^a}{2}\big((n-(n-\frac{n-n_0}{p} - \delta))p - n + 2\big) = \frac{p^a}{2} (\delta p - n_0 + 2).
\end{align*}

We write $L = \ell_{(a,d)} - \ell_{a+1,1}$. Since $\delta p - n_0 + 2$ is always less than $2p$, we have that
$L < p^{a+1}$. 
By Definition \ref{V(a,b)} we have
\[V_{a+1,1} - V_{a,d} = \bigoplus_{i=\ell_{a+1,1}+1}^{\ell_{a+1,1}+L}
\lambda(i).\]
 Since $p^{a+1}|\ell_{a+1,1}$ and
$L < p^{a+1}$ as shown above, we know that $\nu_p(i) < p^{a+1}$ for
all $\ell_{a+1,1} + 1 \leq i < \ell_{a+1,1}+L$. Also,
$\nu_p(\ell_{a+1,1}+L) = \nu_p(\ell_{a,d}) = \nu(d)+a$. Thus 
\[ V_{a+1,1} - V_{a,d} = U + \lambda_{\nu(d) + a}\]
where $U$ consists solely of representations of the form $\lambda_r$ where $r \leq a$.

Thus, the maps from Lemma \ref{Fiber Sequence} form a tower with
fibers that are the slices listed in the statement of the theorem. Moreover, the slices are in the appropriate order by inspection. All
that is left to show is that the tower in fact begins at the bottom with the $n$-slice and ends at the top with $S^n \wedge H\mf{\Z}$.

When $n$ is not divisible by $p$, it is clear from the definition of $S^W \wedge H\mf{\Z}$ and the fact that $H\B{i}{j}$ is equivalent to $S^{\lambda_r} \wedge H\B{i}{j}$ whenever $r \leq j$ that we have a fiber sequence:
\[ \xymatrix{ S^{V_{(1,1)}}\wedge H\B{\nu(1)+1}{0} \ar[r] &
  S^{\widetilde{V}_{(1,1)} - 1 + \lambda_{\nu(1)+a} } \wedge H\mf{\Z} \ar[d]
  \\  & S^W \wedge H\mf{\Z}} \] where $\widetilde{V}_{(1,1)} = V_{(1,1)} -
\bigoplus_{i=1}^{\frac{1}{2}(m_1p-n)-1}\lambda(i)$. Thus our $(m_1p^1
- 1)$-slice fits in as the next nontrivial slice after the
$n$-slice. Note that $\dim(\widetilde{V}_{(1,1)} - 1 + \lambda_{\nu(1)+a}) = n$. After this stage, we already know each of our maps fit together. Additionally, each map evenly exchanges $\lambda_i$ representations, so we have $S^V \wedge H\mf{\Z}$ at each stage with $\dim(V) = n$. Thus, at the top of the tower we have
\[\xymatrix{ S^{(n-2)\rho_G-1} \wedge H\B{1}{k-1} \ar[r] & S^n \wedge H\mf{\Z} \ar[d] \\ & S^{n-2+\lambda_{k-1}}\wedge H\mf{\Z}}\]
since $V_{(k,d)} = m_d\rho_G-1$ and we must have that
the representation in the $(m_dp^k - 1)$-stage contains $n$ copies of
the trivial representation and be of dimension $n$.

When $n$ is divisible by $p$ we have an identical argument but begin
at the bottom of the tower
with \[ \xymatrix{ S^{V_{(1,2)}}\wedge H\B{\nu(2)+1}{0} \ar[r] & S^{\widetilde{V}_{(1,2)} - 1 +
    \lambda_{\nu(2)+a} } \wedge H\mf{\Z} \ar[d] \\  & S^{W'}
  \wedge H\mf{\Z}} \] where $\ds \widetilde{V}_{(1,2)} = V_{(1,2)} -
\bigoplus_{i=1}^{\frac{1}{2}(m_1p-n)-1}\lambda(i)$.

\end{proof}

\begin{example}
When $G = C_p$ and $n$ is not divisible by $p$ we have the following tower for $S^n \wedge H\mf{\Z}$:

\[ \xymatrix{ 
S^{n-3} \wedge H\B{1}{0} \ar[r] & S^n \wedge H\mf{\Z} \ar[d] \\
S^{n-5} \wedge H \B{1}{0} \ar[r] & S^{(n-2) + \lambda_0} \wedge H\mf{\Z}
\ar[d]\\
S^{n-7} \wedge H \B{1}{0} \ar[r] & S^{(n - 4) + 2\lambda_0} \wedge
H\mf{\Z} \ar[d] \\
& \ar@{..}[d]\\
S^{\lceil \frac{n}{p}\rceil + \epsilon - 1} \wedge H \B{1}{0} \ar[r] &
S^{(n-2d + 2)+ (d-1)\lambda_0} \wedge H\mf{\Z} \ar[d]\\
& S^{(n-2d)+ d\lambda_0} \wedge H\mf{\Z} }\]

where $\lceil \frac{n}{p} \rceil + \epsilon$ has the same parity as $n$.

\end{example}

\begin{example}\label{S7 ex} When $G = C_9$ the slice tower for $S^7 \wedge H\mf{\Z}$ is below. We indicate the slice dimension at the side.

\[ \xymatrix{ 
(5(3)^2 - 1) & S^{5\rho - 1} \wedge H\B{1}{1} \ar[r] & S^7 \wedge H\mf{\Z} \ar[d] \\
(3(3)^2 - 1) & S^{3\rho - 1} \wedge H \B{1}{1} \ar[r] & S^{5 + \lambda_1} \wedge H\mf{\Z}
\ar[d]\\
(5(3) - 1) & S^{2 + \lambda_1} \wedge H \B{1}{0} \ar[r] & S^{3 + 2\lambda_1} \wedge
 H\mf{\Z} \ar[d] \\
(3(3) - 1) & S^{\rho - 1} \wedge H \B{2}{0} \ar[r] &
S^{3 + \lambda_1 + \lambda_0} \wedge H\mf{\Z} \ar[d]\\
&& S^{1 + \lambda_1 + 2\lambda_0} \wedge H\mf{\Z} }\]
\end{example}

\begin{example}\label{S16 ex}
When $G = C_9$ the slice tower for $S^{16} \wedge H\mf{\Z}$ is below. We break the tower in half for ease of arrangement.

\[ \xymatrix{ 
 S^{14\rho - 1} \wedge H\B{1}{1} \ar[r] & S^{16} \wedge H\mf{\Z} \ar[d]  &  
S^{5+4\lambda_1} \wedge H\B{1}{0} \ar[r] & S^{6 + 5\lambda_1} \wedge H\mf{\Z} \ar[d] \\
 S^{12\rho - 1} \wedge H\B{1}{1} \ar[r] & S^{14 + \lambda_1} \wedge H\mf{\Z} \ar[d]  &  
S^{4\rho - 1} \wedge H\B{2}{0} \ar[r] & S^{6+4\lambda_1+\lambda_0} \wedge H\mf{\Z} \ar[d] \\
 S^{10\rho - 1} \wedge H\B{1}{1} \ar[r] & S^{12 + 2\lambda_1} \wedge H\mf{\Z} \ar[d]  &  
S^{3+3\lambda_1} \wedge H\B{1}{0} \ar[r] & S^{4 + 4 \lambda_1 + 2\lambda_0} \wedge H\mf{\Z} \ar[d] \\
 S^{8\rho - 1} \wedge H\B{1}{1} \ar[r] & S^{10 + 3\lambda_1} \wedge H\mf{\Z} \ar[d]  &  
S^{3+2\lambda_1} \wedge H\B{1}{0} \ar[r] & S^{4 + 3\lambda_1+3\lambda_0} \wedge H\mf{\Z} \ar[d] \\
 S^{6\rho - 1} \wedge H\B{1}{1} \ar[r] & S^{8 + 4\lambda_1} \wedge H\mf{\Z} \ar[d] & 
S^{\rho - 1} \wedge H\B{2}{0} \ar[r] & S^{4 + 2\lambda_1 + 4\lambda_0} \wedge H\mf{\Z} \ar[d]\\
&&& S^{2 + 2\lambda_1 + 5\lambda_0} \wedge H\mf{\Z} }\]
\end{example}

In comparing the towers from Examples \ref{S7 ex} and \ref{S16 ex} we see a similarity in pattern. The bottom half of the towers consist of alternating slices formed from $\B{1}{0}$ and $\B{2}{0}$. Both towers begin with $\B{1}{0}$ and end with $\B{2}{0}$. The reason for this is that 16 and 7 differ by the order of our group $|C_9|$. In the tower for $S^{16} \wedge H\mf{\Z}$ the spectra in between form a $3$-adic pattern of $\B{2}{0}$ and $\B{1}{0}$. If we computed the tower for $S^{25} \wedge H\mf{\Z}$ we would see the same start and end and a longer alternating pattern of one $\B{2}{0}$ followed by two $\B{1}{0}$. This pattern, owing to our result relying on the $p$-adic orders of the integers $m_b$, occurs more generally. 

\section{Verification of Slices}
In order to prove Theorems \ref{Main Slice Theorem} and \ref{n-slice 1} we require a result about homology.

\subsection{Preliminaries}
We will need to use precise formulations of restriction and induction of Mackey functors to perform homology computations. The following are given in \cite{RO(G) slices} and are equivalent to Webb's definitions in \cite{Webb}. In each definition below, $G$ is an abelian group with subgroup $H$.

\begin{definition}
 If $\mf{M}$ is any $G$-Mackey functor then the restriction of $\mf{M}$ to $H$, denoted $\big\downarrow^G_H \mf{M}$, is given by 
\[ \big\downarrow^G_H\mf{M} (H/K) = \mf{M}(G/K)\]
for any subgroup $K$ of $H$.
\end{definition}

\begin{definition}  If $\mf{M}$ is an $H$-Mackey functor the induction of $\mf{M}$ up to $G$, denoted $\big\uparrow^G_H \mf{M}$, is given by
\[ \big\uparrow^G_H\mf{M}(G/K) = \mf{M}(i^*_H G/K)\]
for any subgroup $K$ of $G$ where $i^*_H$ is the forgetful functor
from $G$-sets to $H$-sets.

\end{definition}

While the definition of the restricted Mackey functor
$\big\downarrow^G_H\mf{M}$ is obvious for any Mackey functor or group,
it is useful to unpack the definition for
$\big\uparrow^G_H\mf{M}$ further. In our work, we only consider $G$ to
be a cyclic $p$-group and thus the following equivalent way of writing
$\big\uparrow^G_H \mf{M}$ will be useful. It follows immediately from
the definition of Mackey functors and properties of $G$-sets.

\begin{proposition}
If $G$ is a cyclic $p$-group with subgroups $H$ and $K$ and $\mf{M}$
is a $G$-Mackey functor then
\[\big\uparrow^G_H \mf{M} (G/K) = \begin{cases} \Z[G] \otimes_{\Z[H]} \mf{M}(H/K) & K\subseteq H \\
	(\Z[G] \otimes_{\Z[H]} \mf{M}(H/H))^K & H \subseteq K \end{cases} \]
\end{proposition}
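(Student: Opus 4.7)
The plan is to unpack the definition $\big\uparrow^G_H\mf{M}(G/K) = \mf{M}(i^*_H G/K)$ by explicitly computing the $H$-set $i^*_H G/K$ in each case and then applying the fact that Mackey functors turn disjoint unions into direct sums. Because $G$ is a cyclic $p$-group, its subgroup lattice is a chain, so the hypotheses $K \subseteq H$ and $H \subseteq K$ are exhaustive and can be handled separately.

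For the case $K \subseteq H$, I would decompose $G/K$ into $H$-orbits using the restricted $G$-action. Since $G$ is abelian, the stabilizer in $H$ of any coset $gK$ is $H \cap gKg^{-1} = H \cap K = K$, so every orbit is $H$-equivariantly isomorphic to $H/K$, and a counting argument gives exactly $[G:H]$ orbits. Applying $\mf{M}$ and using additivity yields $\mf{M}(H/K)^{[G:H]}$. Since $\Z[G]$ is a free right $\Z[H]$-module of rank $[G:H]$, and the Weyl action of $H$ on $\mf{M}(H/K)$ is trivial (as stipulated at the start of Section~2), this agrees with $\Z[G] \otimes_{\Z[H]} \mf{M}(H/K)$.

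For the case $H \subseteq K$, for any $h \in H$ and $gK \in G/K$, abelianness together with $h \in K$ gives $hgK = ghK = gK$, so $H$ acts trivially on $G/K$. Consequently $i^*_H G/K$ is the disjoint union of $[G:K]$ fixed points, i.e.\ $[G:K]$ copies of $H/H$, and applying $\mf{M}$ yields $\mf{M}(H/H)^{[G:K]}$. To match this with $(\Z[G]\otimes_{\Z[H]}\mf{M}(H/H))^K$, I would identify $\Z[G]\otimes_{\Z[H]}\mf{M}(H/H)$ with $\Z[G/H]\otimes \mf{M}(H/H)$ (again using trivial Weyl action), observe that $K$ acts through the quotient $K/H$ on the first factor and trivially on the second, and conclude that the $K$-fixed points equal $\Z[(K/H)\backslash(G/H)]\otimes \mf{M}(H/H) \cong \mf{M}(H/H)^{[G:K]}$ since the orbit set has $[G:K]$ elements.

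I expect the main obstacle to be the bookkeeping in the second case: justifying that the $K$-fixed point functor passes through the tensor product, and rewriting $\Z[G]\otimes_{\Z[H]}\mf{M}(H/H)$ as $\Z[G/H]\otimes\mf{M}(H/H)$, both require explicit use of the trivial Weyl action. This hypothesis is easy to gloss but is the crucial ingredient that lets the clean closed-form formula hold.
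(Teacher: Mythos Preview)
Your proposal is correct and spells out exactly what the paper means when it says the result ``follows immediately from the definition of Mackey functors and properties of $G$-sets'': decompose $i^*_H(G/K)$ into $H$-orbits and apply additivity. Your concern about the trivial Weyl action is overcautious---in the first case freeness of $\Z[G]$ over $\Z[H]$ already gives $\Z[G]\otimes_{\Z[H]}\mf{M}(H/K)\cong\mf{M}(H/K)^{[G:H]}$ regardless of how $H$ acts on $\mf{M}(H/K)$, and in the second case the only Weyl group in play is $W_H(H)=\{e\}$, which is automatically trivial, so the formula in fact holds for arbitrary $H$-Mackey functors.
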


This tells us that for $G = C_{p^k}$ the Mackey functor
$\big\uparrow^G_H \mf{M}$ is the fixed point Mackey functor for the
$G/H$-module $\Z[G] \otimes_{\Z[H]} \mf{M}(G/H)$ on subgroups
containing $H$ and is a direct sum of $|G/H|$ copies of the
restriction to $H$ on subgroups contained in $H$.

In performing equivariant computations, we make use of the composition of the restriction and induction operations on Mackey functors.

\begin{proposition}\label{ind res}
Let $G$ be a cyclic $p$-group and $H$ and $K$ be subgroups of $G$. If $\mf{M}$ is a $G$-Mackey functor then the composition of restriction followed by induction is given by
\[\big\uparrow^G_H \big\downarrow^G_H \mf{M} (G/K) = \begin{cases} \Z[G] \otimes_{\Z[H]} \mf{M}(G/K) & K\subseteq H \\
	(\Z[G] \otimes_{\Z[H]} \mf{M}(G/H))^K & H \subseteq K \end{cases} \]
\end{proposition}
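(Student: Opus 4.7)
The plan is to obtain Proposition~\ref{ind res} as an immediate consequence of the preceding proposition on $\big\uparrow^G_H \mf{M}$ together with the definition of restriction. Write $\mf{N} = \big\downarrow^G_H \mf{M}$, so that by the definition of restriction $\mf{N}(H/L) = \mf{M}(G/L)$ for every subgroup $L$ of $H$; in particular, $\mf{N}(H/H) = \mf{M}(G/H)$. The task then reduces to evaluating $\big\uparrow^G_H \mf{N}$ on the orbits $G/K$.

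First I would apply the preceding proposition to the $H$-Mackey functor $\mf{N}$. Because $G$ is a cyclic $p$-group, its subgroup lattice is totally ordered, so any subgroup $K$ of $G$ satisfies either $K \subseteq H$ or $H \subseteq K$, and the two cases of that proposition together cover every possibility. This yields
\[ \big\uparrow^G_H \mf{N}(G/K) = \begin{cases} \Z[G] \otimes_{\Z[H]} \mf{N}(H/K) & K \subseteq H, \\ (\Z[G] \otimes_{\Z[H]} \mf{N}(H/H))^K & H \subseteq K. \end{cases}\]

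Next I would substitute the values of $\mf{N}$ back in terms of $\mf{M}$: in the first case, replace $\mf{N}(H/K)$ by $\mf{M}(G/K)$, and in the second case, replace $\mf{N}(H/H)$ by $\mf{M}(G/H)$. This gives precisely the formula in the statement. The only bookkeeping point worth mentioning is that the structure maps of $\big\uparrow^G_H \big\downarrow^G_H \mf{M}$ on orbits $G/K$ are determined by the structure maps of $\mf{M}$ through the forgetful functor $i_H^*$, so nothing beyond tracking pointwise values is required.

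There is really no serious obstacle here: the content is entirely formal, and the role of the cyclic $p$-group hypothesis is only to guarantee comparability of $H$ and $K$, which lets us appeal directly to the case-by-case formula from the preceding proposition.
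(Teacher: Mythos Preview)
Your proposal is correct and matches the paper's approach: the paper states this proposition without proof, clearly intending it as an immediate consequence of the preceding proposition on $\big\uparrow^G_H \mf{M}$ combined with the definition of $\big\downarrow^G_H \mf{M}$, which is exactly what you do. Your remark that the cyclic $p$-group hypothesis is used only to ensure comparability of $H$ and $K$ is also precisely the point.
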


We will use these observations to describe the homology of certain representation
spheres taken with coefficients in $\B{i}{j}$. First, we note that if 
\[V = m_k + m_{k-1}\lambda_{k-1} + \cdots m_1 \lambda_1 + m_0 \lambda_0\] a cellular decomposition of $S^V$ arises as a union of cells of the following form in appropriate dimensions.

\begin{proposition}
There is a cellular decomposition
\[S^{\lambda_j} = C_{p^k}/C_{p^{j}+} \wedge e^{1}  \mathop{\cup}_{1-\gamma} C_{p^k}/C_{p^{j}+} \wedge e^{2} \]
where $\gamma$ is a generator of $C_{p^k}$.
\end{proposition}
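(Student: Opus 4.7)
The plan is to exhibit an explicit $C_{p^k}$-equivariant CW decomposition of the representation sphere $S^{\lambda_j}$, arising from its concrete description as the one-point compactification of a rotation plane. First I would unpack the representation: since $\lambda_j = \lambda(p^j)$, the chosen generator $\gamma$ of $C_{p^k}$ acts on the underlying copy of $\mathbb{C}$ by multiplication by $e^{2\pi i/p^{k-j}}$, whose kernel is $C_{p^j}$. The effective action of $C_{p^k}/C_{p^j} \cong C_{p^{k-j}}$ is thus a faithful rotation, and $S^{\lambda_j} = \mathbb{C} \cup \{\infty\}$ has exactly two $C_{p^k}$-fixed points: the origin $0$ and the point $\infty$ at infinity, the latter serving as the basepoint.

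Next I would slice $S^{\lambda_j}$ into meridians and sectors using the $p^{k-j}$-th roots of unity. The $p^{k-j}$ radial arcs joining $0$ to $\infty$ through these roots are permuted freely by $C_{p^{k-j}}$, so they form a single $C_{p^k}$-orbit of $1$-cells with stabilizer $C_{p^j}$, contributing $C_{p^k}/C_{p^{j}+} \wedge e^1$. The $p^{k-j}$ closed sectors cut off between consecutive meridians form another free orbit with the same stabilizer, contributing $C_{p^k}/C_{p^{j}+} \wedge e^2$. Together with the two fixed $0$-cells $\{0,\infty\}$, this is a valid $C_{p^k}$-CW structure on $S^{\lambda_j}$, and the displayed decomposition records the attaching of these free cells on top of the fixed $0$-skeleton.

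The remaining step is to identify the attaching map as $1-\gamma$. I would orient each sector so that its boundary enters along one meridian $m$ from $0$ to $\infty$ and exits along the next meridian $\gamma \cdot m$ from $\infty$ back to $0$. At the cellular chain level the boundary of this chosen sector is then $m - \gamma \cdot m = (1-\gamma) \cdot m$, and by $C_{p^k}$-equivariance this extends across the orbit to the attaching map $1 - \gamma \colon C_{p^k}/C_{p^{j}+} \wedge S^1 \to C_{p^k}/C_{p^{j}+} \wedge S^1$ of the $2$-cell orbit onto the $1$-cell orbit. The main obstacle here is routine orientation bookkeeping: once consistent orientations on the meridians and sectors are fixed, the appearance of $1-\gamma$ (rather than $\gamma-1$ or some rotate of it) is forced by the direction in which the sector's boundary is traversed.
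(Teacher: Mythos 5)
The paper does not actually prove this proposition; immediately after stating it, the text defers to external references for the construction, writing that the reader should consult Section~6 of the Hill--Hopkins--Ravenel proof sketch or Proposition~1.2 of their RO$(G)$-graded slices paper. Your argument supplies exactly the details those references contain: the identification of $\lambda_j$ as the plane on which $\gamma$ acts by rotation through $2\pi/p^{k-j}$ with kernel $C_{p^j}$, the two fixed $0$-cells $\{0,\infty\}$, the orbit of meridional $1$-cells and the orbit of sector $2$-cells each isomorphic to $C_{p^k}/C_{p^j}$, and the cellular boundary $m \mapsto m - \gamma\cdot m$ exhibiting the attaching map as $1-\gamma$. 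This is correct, and it is the standard equivariant CW decomposition of a rotation plane, so there is no substantive divergence from the source the paper cites --- you have simply written out the geometric argument the paper leaves implicit, including the orientation bookkeeping that forces $1-\gamma$ rather than $\gamma-1$.

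One small point worth noting explicitly: the displayed formula in the proposition suppresses the $S^0$ at the bottom of the cell structure (the two fixed points). You correctly restore it when you say the free $1$- and $2$-cell orbits sit on top of the fixed $0$-skeleton, and this is needed for the decomposition to make sense as a pointed $G$-CW complex, since the meridians have their endpoints on $\{0,\infty\}$ and without those $0$-cells there is nowhere for the $1$-cells to attach.
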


For details conerning this cellular decomposition, the reader is encouraged to see Section 6 in \cite[Section 6]{HHR Proof} or \cite[Proposition 1.2]{RO(G) slices}. 

\begin{lemma}\label{Hom res} Let $1 \leq j < i \leq k$ where $i+j \leq k$.
If $W$ is a representation of $G=C_{p^k}$ and $C_{p^{i+j}} \subseteq H$ then the restriction map 
\[\mf{H_{-\epsilon}}(S^{-W};\B{i}{j})(G/G) \rightarrow \mf{H_{-\epsilon}}(S^{-W};\B{i}{j})(G/H)\]
is an injection for $\epsilon = 0,1$.
\end{lemma}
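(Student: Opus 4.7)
The plan is to compute $\mf{H}_{-\epsilon}(S^{-W}; \B{i}{j})$ from a cellular model of $S^W$ and then analyze the restriction map cell by cell. First, I would decompose $W$ into its isotypical summands and equip $S^W$ with the product $G$-CW structure coming from the cellular decomposition $S^{\lambda_r} = G/C_{p^r+} \wedge e^1 \cup_{1-\gamma} G/C_{p^r+} \wedge e^2$. The resulting cellular cochain complex has, in each degree, a direct sum of Mackey functors of the form $\big\uparrow^G_{C_{p^r}} \big\downarrow^G_{C_{p^r}} \B{i}{j}$, whose values at $G/K$ are given explicitly by Proposition \ref{ind res}. Because $\B{i}{j}(G/C_{p^m}) = 0$ for $m \leq j$ by Definition \ref{B(i,j)}, only cells with stabilizer $C_{p^r}$ for $r > j$ can contribute.

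Next, I would focus on the top two cochain degrees, which control $\mf{H}_{-\epsilon}$ for $\epsilon = 0,1$, and write the restriction map from $G/G$-values to $G/H$-values using Proposition \ref{ind res}. The hypothesis $C_{p^{i+j}} \subseteq H$ enters decisively here. For cells with stabilizer $C_{p^r}$ satisfying $r \geq i+j$, the restriction on $\B{i}{j}$-values is the identity on $\Z/p^i$, so the induced map on $G$-invariants into $H$-invariants is a componentwise identity. For cells with stabilizer $C_{p^r}$ satisfying $j < r < i+j$, the map is an inclusion of invariants $N^G \hookrightarrow N^H$ inside a permutation module, which is always injective. The attaching maps $1 - \gamma$ commute with restriction, so the restriction map is a genuine chain map, componentwise injective in the top two degrees.

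To pass from componentwise injectivity at the cochain level to injectivity on $\mf{H}_{-\epsilon}$, one must check that any cocycle at the $G/G$-level whose restriction becomes a coboundary at the $G/H$-level is already a coboundary at $G/G$. The main obstacle is this last step: in general a coboundary at $G/H$ involving $H$-invariant chains may fail to lift to $G$-invariants. The hypothesis $C_{p^{i+j}} \subseteq H$ is exactly what rules this out, because the coefficient Mackey functor is identified as $\Z/p^i$ with identity restriction on every subgroup containing $C_{p^{i+j}}$, so the $H$-invariant chains on cells with large stabilizer do lift to $G$-invariant chains. The remaining verification is combinatorial bookkeeping of the top-dimensional cells of $S^W$ and their attaching maps, keeping track of which stabilizer classes $C_{p^r}$ appear in dimensions $\dim W$ and $\dim W - 1$.
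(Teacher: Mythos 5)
Your strategy is sound in outline but has a genuine gap precisely at the step you yourself flag as ``the main obstacle.'' You correctly observe that only cells with stabilizer $C_{p^r}$ for $r>j$ contribute, and you correctly identify the crucial fact that the restriction $\B{i}{j}(G/G)\to\B{i}{j}(G/H)$ is an isomorphism whenever $C_{p^{i+j}}\subseteq H$. But you then defer the passage from chain-level injectivity to injectivity on $\mf{H}_{-\epsilon}$ to unspecified ``combinatorial bookkeeping.'' That bookkeeping is the entire content of the lemma. Moreover, your assertion that ``$H$-invariant chains on cells with large stabilizer do lift to $G$-invariant chains'' is not correct as stated: if a cell has stabilizer $C_{p^r}$ with $C_{p^r}\subseteq H$, the $G$-invariants $\big\uparrow^G_{C_{p^r}}\big\downarrow^G_{C_{p^r}}\B{i}{j}(G/G)\cong\B{i}{j}(G/C_{p^r})$ sit diagonally inside the strictly larger group $\big\uparrow^G_{C_{p^r}}\big\downarrow^G_{C_{p^r}}\B{i}{j}(G/H)\cong\prod_{G/H}\B{i}{j}(G/C_{p^r})$, so most $H$-invariant chains do \emph{not} lift. (The correct statement that would close the argument is: an isomorphism in degree $0$ together with injectivity in degree $-1$ of the chain-level restriction suffices, but you would need to verify injectivity in degree $-1$ for every stabilizer type that can appear, including ones larger than $H$.)

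The paper sidesteps this entirely by choosing a better cell structure. Rather than the full product decomposition you propose (which puts cells of every stabilizer type adjacent to $S^0$), the paper nests the $\lambda_r$ factors so that the cells in dimensions $-1$ and $-2$ of $S^{-W}$ are a single orbit $G/G(W)_+$, where $G(W)$ is the maximal proper subgroup on which $W$ restricts with a trivial summand. The relevant portion of the chain complex then has only one orbit type and reads
\[\B{i}{j}\longrightarrow \big\uparrow^G_{G(W)}\big\downarrow^G_{G(W)}\B{i}{j}\xrightarrow{\ 1-\gamma\ }\big\uparrow^G_{G(W)}\big\downarrow^G_{G(W)}\B{i}{j}.\]
The analysis then splits cleanly into two cases: if $C_{p^{i+j}}\subseteq G(W)$ the complex is exact at the $G/G$ level so there is nothing to prove, and otherwise $G(W)\subset H$ forces both $\ker(1-\gamma)$ and the image of the first map to agree at the $G/G$ and $G/H$ levels, giving the injection on both $H_0$ and $H_{-1}$ directly. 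You should adopt this choice of cell structure; it eliminates the bookkeeping you left undone and makes the degree $-1$ case transparent.
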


\begin{proof}
If $W$ has at least 2 trivial summands, then the homology in dimensions $0$ and $-1$ is trivial so the result holds. If $W$ contains exactly 1 trivial representation then the homology in dimension $0$ is trivial. Furthermore,
\[H_{-1}(S^{-W};\B{i}{j}) = H_0(S^{-W+1}; \B{i}{j}),\]
so it will suffice to consider the case when $W$ has no trivial summands.

Let $G(W)$ denote the maximal proper subgroup of $G$ such that the restriction of $W$ to $G(W)$ has a trivial summand. Then the cellular decomposition of $S^{-W}$ is of the form
\[S^0 \cup G/G(W)_+ \wedge e^{-1} \mathop{\cup}_{1-\gamma} G/G(W)_+
\wedge e^{-2} \cup \cdots\]
where $\gamma$ is a generator of $G$.
The associated chain complex is of the form
\[\xymatrixrowsep{1pc}\xymatrix{\B{i}{j} \ar[r] &
  \big\uparrow_{G(W)}^G \big\downarrow^G_{G(W)}\B{i}{j}
  \ar[r]^{1-\gamma} & \big\uparrow_{G(W)}^G
  \big\downarrow^G_{G(W)}\B{i}{j} \ar[r] & \cdots}\]
where the first map is the restriction on subgroups containing $G(W)$
and is a composition of restriction maps with the
diagonal map on subgroups contained in $G(W)$. From Proposition
\ref{ind res} it follows that our complex on $G$-fixed points is
\[\xymatrixrowsep{1pc}\xymatrix{\B{i}{j}(G/G) \ar[r] &
  \B{i}{j}(G/G(W)) \ar[r] &\B{i}{j}(G/G(W))}.\] 
If $C_{p^{i+j}} \subseteq G(W)$, by Definition \ref{B(i,j)} this
sequence is exact so the statement is trivially true.

If $G(W)$ does not contain $C_{p^{i+j}}$, then $G(W) \subset H$ since
$C_{p^{i+j}} \subseteq H$. Then by Proposition \ref{ind res} the restriction map on homology in dimension $0$ is of the form
\[\xymatrix{\ker(\B{i}{j}(G/G) \rightarrow \B{i}{j}(G/G(W))) \ar[d] \\
            \ker(\B{i}{j}(G/H) \rightarrow  (\Z[G] \otimes_{\Z[G(W)]} \B{i}{j}(G/G(W)))^H ).}\]
Since $C_{p^{i+j}} \subseteq H$, we know that we have an isomorphism given by the composition of restriction maps
\[\B{i}{j}(G/G) \rightarrow \B{i}{j}(G/H).\]
Therefore, the restriction map on $H_0(S^{-W};\B{i}{j})$ is always an injection.

Additionally, since $G(W) \subset H$ we know that 
\[\mf{\ker(1-\gamma)}(G/G) \cong \mf{\ker(1-\gamma)}(G/H) = \B{i}{j}(G/G(W))\]
and since $\B{i}{j}(G/G)$ is also isomorphic to
$\B{i}{j}(G/H)$, we have the desired injection in dimension $-1$ as well.
\end{proof}

\subsection{Slice Results}
To assist in proving Theorems \ref{Main Slice Theorem} and \ref{n-slice 1} we establish a more computationally accesible way to classify $n$-slices.

\begin{definition}
Let $X$ be a $G$-spectrum. If $X \in \tau_{\geq n}$ we write $X \geq n$ and if the localization map $X \rightarrow P^n(X)$ is an equivalence we write $X \leq n$.
\end{definition}

In order to show that a $G$ spectrum $X$ is an
$n$-slice, we must  show that $X \leq n$ and $X \geq n$. We will use the following result to assist in showing $X \geq n$. It is equivalent to \cite[Corollary 3.9]{ESP}
where we let $Y = H\mf{M}$. 

\begin{proposition}
\label{V geq n} Let $W$ be a representation of $G$ and $\mf{M}$ be a
$G$-Mackey functor such that
$S^{W} \wedge H\mf{M} \geq \mbox{dim}(W)$. If $V$ is a subrepresentation of $W$ such that
\begin{enumerate}
\item $V^G = W^G$ and
\item for all proper subgroups $H \subset G$,
the restriction $i^*_H (S^{V} \wedge H\mf{M}) \geq \mbox{dim}(V)$,
\end{enumerate}
then $S^V \wedge H\mf{M} \geq  \mbox{dim}(V)$.
\end{proposition}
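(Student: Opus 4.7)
The plan is to decompose $W = V \oplus U$, where $U$ is the orthogonal complement of $V$ in $W$. Hypothesis (1), $V^G = W^G$, forces $U^G = 0$, so the unit sphere $S(U)$ has empty $G$-fixed points. Smashing the cofiber sequence $S(U)_+ \to S^0 \to S^U$ with $S^V \wedge H\mf{M}$ yields
\[
S(U)_+ \wedge S^V \wedge H\mf{M} \longrightarrow S^V \wedge H\mf{M} \longrightarrow S^W \wedge H\mf{M}.
\]
Since $\tau_{\geq \dim V}$ is closed under extensions, it suffices to place both outer terms in $\tau_{\geq \dim V}$.

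The right-hand term is immediate: by hypothesis $S^W \wedge H\mf{M} \geq \dim W$, and $\dim W \geq \dim V$ since $V \subseteq W$.

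For the left-hand term, I would use that $S(U)^G = \emptyset$ to equip $S(U)$ with a $G$-CW structure all of whose cells have the form $G/H_+ \wedge D^k$ with $H \subsetneq G$ a proper subgroup. Smashing the skeletal filtration of $S(U)_+$ with $S^V \wedge H\mf{M}$ and working up the skeleta, each successive cofiber is a wedge of pieces of the form
\[
G/H_+ \wedge S^k \wedge S^V \wedge H\mf{M} \;\simeq\; G_+ \wedge_H \bigl(S^k \wedge i^*_H(S^V \wedge H\mf{M})\bigr).
\]
Hypothesis (2) places $i^*_H(S^V \wedge H\mf{M})$ in the $H$-equivariant $\tau_{\geq \dim V}$; smashing with $S^k$ for $k \geq 0$ only raises the slice lower bound; and the induction $G_+ \wedge_H(-)$ carries each generator $H_+ \wedge_K S^{m\rho_K - \epsilon}$ of the $H$-equivariant $\tau_{\geq \dim V}$ to the generator $G_+ \wedge_K S^{m\rho_K - \epsilon}$ of the $G$-equivariant $\tau_{\geq \dim V}$, while commuting with colimits, cofibers, wedges, and retracts. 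Each cell contribution is therefore $\geq \dim V$, so closure under extensions and directed colimits yields $S(U)_+ \wedge S^V \wedge H\mf{M} \geq \dim V$. Feeding both outer terms back into the cofiber sequence completes the argument.

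The main delicate point is the behavior of induction and of the cellular filtration with respect to $\tau_{\geq n}$, but both assertions follow essentially from the definition of $\tau_{\geq n}$ as the localizing subcategory generated by the slice cells $G_+ \wedge_H S^{m\rho_H - \epsilon}$, making the proof principally a matter of bookkeeping rather than requiring new input.
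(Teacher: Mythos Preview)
The paper does not actually prove this proposition; it simply cites \cite[Corollary~3.9]{ESP} (Hill's primer) and states that the proposition is equivalent to that result with $Y = H\mf{M}$. Your argument is correct and is, in fact, the standard proof of that corollary: one uses the cofiber sequence $S(U)_+ \to S^0 \to S^U$ coming from the complement $U = W - V$, observes that $U^G = 0$ forces the cells of $S(U)$ to be induced from proper subgroups, and then invokes the inductive hypothesis together with the fact that induction $G_+ \wedge_H(-)$ preserves slice connectivity. So you have supplied what the paper outsources, and by the expected route.
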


We now describe a straightforward way to show that a certain collection of $G$-spectra are slices.

\begin{theorem}\label{SV are slice}Let $G = C_{p^k}$.
Let $V$ be a $G$-representation and $\mf{M}$ be a $G$-Mackey functor. In order to show that the $G$-spectrum $S^{V} \wedge H\mf{M}$ is a dim$(V)$-slice we must show that
\[ V \subset m\rho_G-\epsilon \mbox{\hspace{.5cm} and \hspace{.5cm}} V^G = (m\rho_G - \epsilon)^G \]
for some $m > 0$ and
\[ [S^{-\epsilon}, S^{V-m\rho_G} \wedge H\mf{M}] = 0\]
for all $mp^k - \epsilon > \dim(V)$.
\end{theorem}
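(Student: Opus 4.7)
The strategy is to verify both $S^V \wedge H\mf{M} \geq n$ and $S^V \wedge H\mf{M} \leq n$, where $n = \dim V$; together these characterize $S^V \wedge H\mf{M}$ as a $\dim V$-slice. The argument proceeds by induction on $|G|$. Let $W = m\rho_G - \epsilon$ be the ambient representation supplied by the first hypothesis, so $V \subseteq W$ and $V^G = W^G$.

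For the lower bound, the plan is to apply Proposition \ref{V geq n} to the inclusion $V \hookrightarrow W$. Since $S^{m\rho_G - \epsilon}$ is a slice cell in $\tau_{\geq mp^k - \epsilon}$ and $H\mf{M}$ is connective, multiplicativity of the slice filtration gives $S^W \wedge H\mf{M} \geq \dim W$. Condition (1) of the proposition is the given fixed-point equality $V^G = W^G$. Condition (2), that $i^*_H(S^V \wedge H\mf{M}) \geq n$ for every proper subgroup $H \subsetneq G$, is supplied by the inductive hypothesis, once one verifies that the assumptions of the theorem descend to the restricted data $(i^*_H V, i^*_H \mf{M})$.

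For the upper bound, the plan is to show $[A, S^V \wedge H\mf{M}]_G = 0$ for every $A \in \tau_{\geq n+1}$. Since $\tau_{\geq n+1}$ is generated by slice cells $G_+ \wedge_H S^{k\rho_H - \delta}$ with $k|H| - \delta \geq n+1$, it suffices to check the vanishing on such cells. The induction--restriction adjunction rewrites the relevant homotopy classes as
\[
[G_+ \wedge_H S^{k\rho_H - \delta}, S^V \wedge H\mf{M}]_G \cong [S^{k\rho_H - \delta}, i^*_H(S^V \wedge H\mf{M})]_H.
\]
When $H = G$ this is exactly the vanishing assumed in the second hypothesis, with $m = k$ and $\epsilon = \delta$. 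For proper $H \subsetneq G$, the inductive hypothesis implies that $i^*_H(S^V \wedge H\mf{M})$ is itself a $\dim V$-slice of $H$-spectra, so the cell pairing vanishes.

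The principal obstacle is the inductive descent of the hypotheses to proper subgroups. Given $V \subseteq m\rho_G - \epsilon$ with matching $G$-fixed points, the $H$-fixed points $V^H$ may strictly exceed $V^G$, so the pair $(m,\epsilon)$ typically fails to remain appropriate at $H$. The plan is to re-select $(m', \epsilon')$ so that $i^*_H V \subseteq m'\rho_H - \epsilon'$ with matching $H$-fixed points, and then to translate the given $G$-level vanishing into the required $H$-level statement using the identification $i^*_H \rho_G = [G:H]\rho_H$ together with the standard compatibility between the $RO(G)$-graded homotopy of Eilenberg--MacLane spectra and restriction to $H$.
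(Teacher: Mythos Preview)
Your overall strategy---induction on $|G|$, Proposition~\ref{V geq n} for the lower bound, and the induction/restriction adjunction for the upper bound---is exactly the paper's argument. Your remark that $S^{m\rho_G-\epsilon}\wedge H\mf{M}\geq \dim W$ via connectivity of $H\mf{M}$ fills in a detail the paper leaves implicit.

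The gap is in your final paragraph. You correctly flag that the inductive hypothesis is not automatic, but the proposed repair does not work. Re-selecting $(m',\epsilon')$ with $\epsilon'\in\{0,1\}$ forces $m'-\epsilon'=\dim V^H$, so $m'\le\dim V^H+1$; there is no reason in general that $i^*_H V\subset (\dim V^H+1)\rho_H-1$, and this fails as soon as $V$ carries many copies of a single nontrivial irreducible relative to its fixed points. More decisively, the $G$-level vanishing $[S^{-\epsilon},S^{V-m\rho_G}\wedge H\mf{M}]=0$ is a statement about the value of the homotopy Mackey functor at $G/G$ only; it carries no information about the value at $G/H$, which is what the $H$-level condition demands. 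Restriction of Mackey functors goes from $G/G$ to $G/H$, and vanishing at the source of a map does not give vanishing at the target. There is no ``standard compatibility'' that converts one into the other.

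The paper does not attempt this descent either: its proof simply invokes the inductive hypothesis that restrictions to proper subgroups are already $\dim V$-slices, without deriving that from the stated conditions. The theorem is intended to be read as the inductive step of a criterion to be checked at every subgroup of $C_{p^k}$, not as a self-contained condition on $(V,\mf{M})$ at the top level alone. In the applications (Theorems~\ref{Main Slice Theorem} and~\ref{n-slice 1}) the restricted data $i^*_H V_{(a,b)}$, $i^*_H W$, $i^*_H W'$ together with the restricted Mackey functors again have the required form, so the same check runs uniformly at each level; that, rather than any formal translation from $G$ to $H$, is what closes the induction.
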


\begin{proof}
We induct on the subgroups of $C_{p^k}$. Our underlying spectrum is $S^{\dim(V)} \wedge H\mf{M}$ which is $\geq \dim(V)$ by definition. Since there are no non-trivial maps from spectra of the form $C_{p^k_+} \wedge S^m$ to our spectrum whenever $m > \dim(V)$ we know it is $\leq \dim(V)$ as well.

By our inductive hypothesis and \autoref{V geq n} with $W = m\rho_G - \epsilon$ and our inductive
hypothesis, we need only show that $V \subset m\rho_G - \epsilon$ and $V^G = (m\rho_G-\epsilon)^G$ in order to show that $S^{V} \wedge H\mf{M} \geq \dim(V)$.

To show $S^V \wedge H\mf{M} \leq \dim(V)$, we must
show that 
\[ [G/H_+ \wedge S^{t\rho_H-\epsilon}, S^V \wedge H\mf{M}] = 0 \]
for all $H \subseteq G$ and $t|H| -\epsilon > \mbox{dim}(V)$. By our inductive hypothesis and Spanier-Whitehead duality, it will be
sufficient to show that \[ [S^{-\epsilon},S^{V-t\rho_G}\wedge H\mf{M}] =
0\] for $tp^k -\epsilon > \mbox{dim}(V)$.
\end{proof}

As a first example using the above methodology, we compute the slice tower for $S^n \wedge H\mf{\Z}$ for small $n$.
We know by \cite[Corollay 2.16]{ESP} that $H\mf{\Z}$ is itself a $0$-slice and thus the slice tower is trivial in the case $n=0$. We show below that the towers for $S^1 \wedge H\mf{\Z}$ and $S^2 \wedge H\mf{\Z}$ are also trivial.

\begin{proposition}
  The slice towers for $S^n \wedge H\mf{\Z}$ are trivial for $n=1,2$ when $G= C_{p^k}$.
\end{proposition}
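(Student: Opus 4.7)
The plan is to invoke Theorem~\ref{SV are slice} with $V = n$ (the trivial $n$-dimensional representation), $\mf{M} = \mf{\Z}$, $m = n$, and $\epsilon = 0$, so that the ambient representation is $W = n\rho_G$. The inclusion $n \subset n\rho_G$ and the fixed-point equality $V^G = n = W^G$ are immediate, so the entire content of the proof collapses to checking the single vanishing
\[
\bigl[S^0,\, S^{n-n\rho_G} \wedge H\mf{\Z}\bigr]^G \;=\; \pi^G_{n\bar{\rho}_G}(H\mf{\Z}) \;=\; 0
\]
for $n = 1$ and $n = 2$, where $\bar{\rho}_G := \rho_G - 1$ is the reduced regular representation; the inequality $np^k > n$ from the statement of Theorem~\ref{SV are slice} is automatic whenever $G$ is nontrivial.

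To prove this vanishing I would use the unit-sphere cofiber sequence
\[
S(n\bar{\rho}_G)_+ \longrightarrow S^0 \longrightarrow S^{n\bar{\rho}_G}
\]
together with the resulting long exact sequence obtained by applying $[-,\,H\mf{\Z}]^G$. For $n = 1, 2$, the underlying sphere $S(n\bar{\rho}_G)$ has dimension $n(p^k-1) - 1 \geq 1$, so it is path-connected; since it is also a $G$-CW complex with no negative-dimensional cells, $\tilde{H}^{-1}_G(S(n\bar{\rho}_G)_+;\mf{\Z}) = 0$ and $\tilde{H}^{0}_G(S(n\bar{\rho}_G)_+;\mf{\Z}) \cong \mf{\Z}$, with the collapse $S(n\bar{\rho}_G) \to \mathrm{pt}$ inducing an isomorphism from $[S^0, H\mf{\Z}]^G = \mf{\Z}$ onto this $\mf{\Z}$. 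The long exact sequence then pinches $\pi^G_{n\bar{\rho}_G}(H\mf{\Z})$ between two zeros, and Theorem~\ref{SV are slice} delivers the slice conclusion.

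The main obstacle is the zeroth Bredon cohomology identification $\tilde{H}^0_G(S(n\bar{\rho}_G)_+;\mf{\Z}) \cong \mf{\Z}$ together with the claim that the collapse map realizes the isomorphism. Connectedness of the unit sphere is elementary, but the cohomology identification requires unpacking the constant Mackey functor $\mf{\Z}$ at the level of cellular cochains: every equivariant $0$-cell contributes a copy of $\mf{\Z}$ through Frobenius reciprocity, and on a connected $G$-CW complex the degree-one differential cuts the space of $0$-cocycles down to a single copy of $\mf{\Z}$ corresponding to the class of the constant map. Once recorded, this is exactly what identifies the restriction map with the identity and forces the desired vanishing.
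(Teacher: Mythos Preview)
Your reading of Theorem~\ref{SV are slice} is the problem. The two displayed conditions there play different roles: the containment $V\subset m\rho_G-\epsilon$ with $V^G=(m\rho_G-\epsilon)^G$ is needed for \emph{one} choice of $(m,\epsilon)$ and handles $S^V\wedge H\mf{M}\geq\dim(V)$; the vanishing $[S^{-\epsilon},S^{V-m\rho_G}\wedge H\mf{M}]=0$ must hold for \emph{every} pair $(m,\epsilon)$ with $mp^k-\epsilon>\dim(V)$ and $\epsilon\in\{0,1\}$, and this is what gives $S^V\wedge H\mf{M}\leq\dim(V)$. (Look at the proof of the theorem: the ``$\leq$'' part reduces to killing maps out of all slice cells $S^{t\rho_G-\epsilon}$ of dimension $>\dim(V)$, not just one.) So checking only $(m,\epsilon)=(n,0)$ is not enough. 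For instance, when $n=2$ you also need $[S^{0},S^{2-\rho_G}\wedge H\mf{\Z}]=0$ (the case $t=1$, $\epsilon=0$) and the $\epsilon=1$ cases, none of which your cofiber argument touches.

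The paper handles all of these at once by writing down the $G$-fixed part of the cellular chain complex of $S^{n-t\rho_G}$ and reading off that the homology in degrees $0$ and $-1$ vanishes for every $t\geq 1$. Your unit-sphere cofiber sequence is a perfectly good alternative device, and it would extend: for any $t\geq 1$ the representation $t\rho_G-n+\epsilon$ (with $n\leq 2$, $\epsilon\in\{0,1\}$) contains a copy of $\bar{\rho}_G$, so its unit sphere is connected, and the same $\tilde H^0_G\cong\mf{\Z}$ identification would run. But as written you have only established one of the required vanishings, so the ``$\leq n$'' direction is incomplete.
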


\begin{proof}
It is clear that
  $S^n \wedge H\mf{\Z} \geq n$ for all $n$. By \autoref{SV are slice}, it will suffice to show that 
\[ [S^{-\epsilon}, S^{n-t\rho_G}\wedge H\mf{\Z}]=0 \]
when $n =1,2$ and $t > \frac{n+\epsilon}{p^k}$. The $C_{p^k}$-fixed point part of the chain complex associated to $S^{n-t\rho_G}$ ending in dimension $n-t$ is as follows:
\[\xymatrixrowsep{1pc}\xymatrix{\Z \ar[r]^1  & \Z \ar[r]^0  & \Z \ar[r]^p  & \Z \ar[r]^0  & \cdots  }\]

Since $t > \frac{n+\epsilon}{p} > 0$ and $t$ is an integer, $t \geq 1$. Thus, when $n =1$, $n - t \leq 0$ and when $n = 2$, $n-t \leq 1$. In either case, we can see that the homology in dimensions $0$ and $-1$ are trivial so $S^n \wedge H\mf{\Z} \leq n$ for $n = 1,2$ on $C_{p^k}$
\end{proof}

We are now in a position to show that the spectra we used in the construction of our slice tower are slices.

\SliceV*

\begin{proof}%[Proof of \ref{}]

First, note that if $a = k$, then $V_{(k,b)} = (n-2-2d+2b)\rho_G-1 = m_b\rho_G-1$ so we only need to consider $1 \leq a \leq k-1$. Recall that \[V_{(a,b)} = (n-2)\rho_G - 1 - \bigoplus_{i=1}^{\ell} \lambda(i)\] for $\ell = \frac{1}{2}\big((n-2)p^k - m_bp^a\big)$. Let $L$ be the residue of $\ell$ modulo $p^k$. So we may write $\ell = \ell_kp^k +L$. If $L = 0$, then $V_{(a,b)} = (n-2-2\ell_k)\rho_G-1$ so we are done. Otherwise, $0< L < p^k$ and so \[V_{(a,b)} = (n-2-2\ell_k)\rho_G-1 -\bigoplus_{i=1}^L \lambda(i) \subset (n-2-2\ell_k)\rho_G-1.\]  Furthermore, $\lambda(i)$ is not trivial for $0\leq i \leq L $ and $L < p^k$  so \[(V_{(a,b)})^G = ((n-2-2\ell_k)\rho_G-1)^G.\]

We now show the second part of \autoref{SV are slice}. That is, we show \[[S^{-\epsilon}, S^{V_{(a,b)} - t\rho_G}H\B{\nu+1}{a-1}] = 0\] for $tp^k-\epsilon > \mbox{dim}(V_{(a,b)})$. Again, $\ell=\ell_kp^k + L$ so we can rewrite $V_{(a,b)}-t\rho_G$ as 
\[V_{(a,b)}-t\rho_G = (n-2-2\ell_k-t)\rho_G - 1 - \bigoplus_{i=1}^L \lambda(i).\]
and the condition $tp^k - \epsilon > \mbox{dim}(V_{(a,b)})$ is equivalent to
\[tp^k > (n-2-2\ell_k)p^k - 1 - 2L + \epsilon.\] 

We will now show that $V_{(a,b)}-t\rho_G$ has only non-positive cells for such $t$. We break the proof into two cases depending on the size of $L$.

\underline{Case 1}: $L\leq \frac{p^k-1}{2}$. In this case, $tp^k-\epsilon> \dim(V_{(a,b)})$ is equivalent to
$ tp^k \geq (n-2-2\ell_k)p^k - p^k + \epsilon = (n-3-2\ell_k)p^k + \epsilon $
and hence $t \geq n-2-2\ell_k$. Thus, $V_{(a,b)}-t\rho_G$ is a negative representation.

 \underline{Case 2}: $L > \frac{p^k-1}{2}$. Since we also have that $L<p^k$, $tp^k-\epsilon > \dim(V_{(a,b)})$ is equivalent to
$tp^k  > (n-2-2\ell_k)p^k - 2p^k - 1 +\epsilon = (n-4-2\ell_k)p^k - 1 + \epsilon $
and hence $t \geq n-3-2\ell_k$. Now, we may rewrite $V_{(a,b)} - t\rho_G$ as
\begin{align*}V_{(a,b)}-t\rho_G& = (n-3-2\ell_k-t)\rho_G + \rho_G - 1 - \bigoplus_{i=1}^L \lambda(i)\\
                               & = (n-3-2\ell_k-t)\rho_G - \bigoplus_{i=\frac{p^k+1}{2}}^L \lambda(i). \end{align*}
and so again, we have that $V_{(a,b)}-t\rho_G$ contains only non-positive cells.

Now, since $V_{(a,b)}-t\rho_G$ contains only non-positive cells, by Lemma \ref{Hom res} we have that the restriction
\[H_{-\epsilon}(S^{V_{(a,b)}-t\rho_G};\B{\nu+1}{a-1})(G/G) \rightarrow H_{-\epsilon}(S^{V_{(a,b)}-t\rho_G};\B{\nu+1}{a-1})(G/C_{p^{\nu+a}}),\]
 is an injection. So if we can show that the homology on $G/C_{p^{\nu+a}}$ is trivial, we will have the result.

%Looking at $\ell$ from Definition \ref{V(a,b)} we see that $p^{\nu+a}$ divides both terms and can be factored out. Since 
%\[
%\bigoplus_{i=1}^{p^\nu}\lambda(i)=2\rho_H,
%\]
%we conclude that
%\[
%\bigoplus_{i=1}^{\ell}\lambda(i)=(n-2)p^{k-\nu-a}\rho_H-\tfrac{m_b}{p^{\nu}}\rho_H.
%\]
%Since the restriction of $\rho_G$ to $H$ is $(n-2)p^{k-\nu-a}\rho_H$, the result follows.

It is a simple exercise to show that for $H = C_{p^{\nu+a}}$
\[i_H^\ast V_{(a,b)}=\tfrac{m_b}{p^{\nu}}\rho_H-1.\]
Thus, restricting $V_{(a,b)}-t\rho_G$ to $H = C_{p^{\nu+a}}$ yields $(\tfrac{m_b}{p^{\nu}}-t[G:H])\rho_H -1$. Now recall that we are only considering $1 \leq a \leq k-1$ as $a=k$ yields an appropriate slice in a straightforward manner. Thus for $tp^k > \mbox{dim}(V_{(a,b)}) + \epsilon$ we have 
\[H_{-\epsilon}(S^{V_{(a,b)}-t\rho_G};\B{\nu+1}{a-1})(G/C_{p^{\nu+a}}) = 0.\]
\end{proof}

We will use the following in showing that $S^{W} \wedge H\mf{\Z}$ and $S^{W'} \wedge H\mf{\Z}$ are slices.

\begin{lemma}\label{W + pk}
Let $G = C_{p^k}$. Then
  \[W(n) + \rho_G = W(n+p^k)\]
when $n$ is not a multiple of $p$ and 
\[W'(n) + \rho_G = W'(n+p^k)\]
when $n$ is a multiple of $p$.
\end{lemma}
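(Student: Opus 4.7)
The plan is to compute $W(n+p^k) - W(n)$ directly from the defining formulas and show everything collapses to $\rho_G$, and then handle $W'$ analogously. First I would track how the auxiliary combinatorial data change under $n \mapsto n+p^k$. Since $p^k \equiv 0 \pmod{p}$, the residue $n_0$ and the parity parameter $\delta$ are unchanged, so a short substitution into the formulas preceding Definition \ref{V(a,b)} gives $d(n+p^k) - d(n) = \tfrac{p^{k-1}(p-1)}{2}$, from which $m_b(n+p^k) = m_b(n) + p^{k-1}$ for every $b$, and hence $\ell_{(1,b)}(n+p^k) - \ell_{(1,b)}(n) = \tfrac{p^k(p^k-1)}{2}$.

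Plugging these into Definition \ref{V(a,b)} yields
\[ V_{(1,b)}(n+p^k) - V_{(1,b)}(n) = p^k\rho_G - \bigoplus_{i=\ell_{(1,b)}(n)+1}^{\ell_{(1,b)}(n+p^k)} \lambda(i). \]
The main content of the argument, and the step I expect to be the hardest to pin down carefully, is identifying the trailing block of $\lambda$'s with $(p^k-1)\rho_G$. The $p$-local equivalences $\lambda(i) \simeq \lambda(i+p^k)$ together with the regular representation decomposition from Section 3 imply that for any $\ell$, a block of $p^k$ consecutive $\lambda(i)$'s sums to $2\rho_G$: a complete residue system modulo $p^k$ grouped by $p$-adic valuation produces exactly twice the multiplicities of $\lambda_j$ appearing in $\rho_G$. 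Since our block has length $\tfrac{p^k-1}{2}\cdot p^k$, it splits into $\tfrac{p^k-1}{2}$ such full blocks, giving $(p^k-1)\rho_G$, so $V_{(1,b)}(n+p^k) - V_{(1,b)}(n) = \rho_G$.

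To conclude I would verify that the remaining $\lambda$-correction terms in the definitions of $W$ and $W'$ are unchanged under $n \mapsto n+p^k$. For $W'$ the corrections $1 - \bigoplus_{i=1}^{p-1}\lambda(i) - \lambda(1)$ are visibly $n$-independent, so the result is immediate from the $V_{(1,2)}$ computation. For $W$, the cap $\tfrac{1}{2}(m_1(n)p - n)$ on the correction sum is preserved because the increment $p^{k-1}\cdot p$ in $m_1 p$ exactly cancels the $p^k$ increment in $n$; hence the $\lambda$-correction in $W$ is identical for $n$ and $n+p^k$ and contributes nothing to the difference. Combined with the previous paragraph, this gives $W(n+p^k) - W(n) = W'(n+p^k) - W'(n) = \rho_G$. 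Aside from the block-periodicity step, the whole argument is routine arithmetic substitution into the definitions.
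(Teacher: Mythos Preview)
Your proposal is correct and follows essentially the same route as the paper's proof: both arguments track the change in $\ell_{(1,b)}$ under $n \mapsto n+p^k$, invoke the identity $\bigoplus_{i=1}^{p^k}\lambda(i) = 2\rho_G$ to convert the resulting block of $\tfrac{p^k(p^k-1)}{2}$ consecutive $\lambda(i)$'s into $(p^k-1)\rho_G$, and then check that the $\lambda$-correction terms in $W$ and $W'$ are unchanged because $m_1$ shifts by $p^{k-1}$. The only difference is cosmetic: the paper states and proves the slightly more general identity $V_{(a,b)}(n)+\rho_G = V_{(a,b)}(n+p^{k-a+1})$ before specializing to $a=1$, whereas you work directly at $a=1$, which is all the lemma requires.
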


\begin{proof}
We first show that \[V_{(a,b)}(n) + \rho_G = V_{(a,b)}(n+p^{k-a+1}).\]
From Definition \ref{V(a,b)} we have that the left hand side is
equivalent to 
\[ V_{(a,b)}(n) + \rho_G = (n-1)\rho_G - 1 - \bigoplus_{i=1}^{\ell}
\lambda(i)\]
where $\ell = \frac{1}{2}\big( (n-2)p^k - m_bp^a\big)$. Similarly, the
right hand side is
\begin{align*} V_{(a,b)}(n+p^{k-a+1}) & = (n+p^{k-a+1}-2)\rho_G - 1 -
\bigoplus_{i=1}^{\ell'} \lambda(i) \\ & = (n-1)\rho_G -1
+(p^{k-a+1}-1)\rho_G - \bigoplus_{i=1}^{\ell'} \lambda(i)\end{align*}
where $\ell' = \frac{1}{2} \big( (n+p^{k-a+1}-2)p^k - m_b'p^a
\big)$ and $m_b'$ is the number $m_b$ associated to $n + p^{k-a+1}$ rather than n.

Since $2\rho_G = \bigoplus_{i=1}^{p^k} \lambda(i)$ we see that to
show the left and right hand sides are equal, we must show that $\ell'
= \frac{1}{2}(p^{k-a+1}-1)p^k + \ell$. A simple computation gives that $m_b' = m_b+p^{k-a}$
and thus \begin{align*}\ell' &= \frac{1}{2} \big(
(n+p^{k-a+1}-2)p^k-(m_b+p^{k-a})p^a\big) \\ &= \frac{1}{2}\big( (n-2)p^k
- m_bp^a + (p^{k-a+1})p^k -p^k\big) \\ & = \ell + \frac{1}{2}(p^{k-a+1}-1)p^k.\end{align*} 

Now we can prove the lemma:
\[ W(n+p^k) = V_{(1,1)}(n+p^k) + 1 - \bigoplus_{i=1}^{\frac{1}{2}(m_1'p-(n+p^k))}\lambda(i)\]
where $m_1'$ is the number $m_1$ associated to $n + p^k$.
Since $m_1' = m_1 + p^{k-1}$ then $m_1'p-(n+p^k) = m_1p - n$. Since $ V_{(a,b)}(n) + \rho_G = V_{(a,b)}(n+p^{k-a+1})$, the proof for $W(n)$ is complete. $W'(n) + \rho_G = W'(n+p^k)$ is an immediate consequence.
\end{proof}

\SliceW*

\begin{proof}
We first show the statement is true for $S^W \wedge H\mf{\Z}$. So for now we assume $n$ is not a multiple of $p$. By Lemma \ref{W + pk}
we may additionally assume that $2 < n < p^k$. We have
\[W = (n-2)\rho_G - \bigoplus_{i=1}^{\ell} \lambda(i) -
\bigoplus_{i=1}^{\frac{1}{2}(m_1p-n)}\lambda(i) \] where $\ell$ can be
written as $\ell_kp^k + L$ for $L$ the residue of $\ell$ modulo $p^k$. Thus, $W$ may also be written as
\[W = (n-2-2\ell_k)\rho_G - \bigoplus_{i=1}^{L} \lambda(i) - \bigoplus_{i=1}^{\frac{1}{2}(m_1p-n)}\lambda(i). \]
Since $L < p^k$ and $\frac{1}{2}(m_1p-n) < p$ then using $\overline{\rho_G}$ to denote the reduced regular representation of $G$ we have \[\bigoplus_{i=1}^L \lambda(i) \subset 2\overline{\rho_G} \quad \mbox{  and  } \bigoplus_{i=1}^{\frac{1}{2}(m_1p-n)}\lambda(i) \subset \overline{\rho_G}.\] So $W \subset (n-2-2\ell_k)\rho_G$ and $W^G = ((n-2-2l_k)\rho_G)^G$.

Now we must show
\[  [S^{-\epsilon} , S^{W-t\rho_G} \wedge H\mf{\Z}] = 0 \]
when $tp^k - \epsilon > n$. Since $n < p^k$ then when $\epsilon = 0$ we are considering $t \geq 1$ and when $\epsilon = 1$, $t \geq 2$. We have that
\[ W -t\rho_G  = (n-2-t)\rho_G - \bigoplus_{i=1}^\ell \lambda(i) - \bigoplus_{i=1}^{\frac{1}{2}(m_1p-n)}\lambda(i). \]
where $\ell = \frac{1}{2}\big((n-2)p^k - m_1 p\big).$
We break our argument into two cases: $n$ is even and $n$ is odd.

\underline{Case 1}: If $n$ is even, we can write 
\[\ell = \tfrac{1}{2}(n-4)p^k + p^k - \tfrac{1}{2}m_1p = \tfrac{1}{2}(n-4)p^k + L.\] Recall that we use $m_1$ to denote the integer after $\frac{n}{p}$ that is of the same parity as $n$. So, since $n$ is not a multiple of $p$, $\frac{n}{p} < m_1 < \frac{n}{p}+2$. Furthermore, since $n \leq p^k-1$, then $m_1 \leq \frac{p^k-1}{p}+2$. Simplifying 
gives us that $m_1 \leq p^{k-1}+1$. Then we see that we can bound $L = p^k - \frac{1}{2}m_1p$ as follows
\[ \frac{p^k-p}{2} < L < p^k -\frac{n}{2}.\]
In particular, $L>0$ and so we may rewrite $W$ as
\begin{align*} W & = (n-2-(n-4))\rho_G - \bigoplus_{i=1}^L \lambda(i) - \bigoplus_{i=1}^{\frac{1}{2}(m_1p-n)} \lambda(i) \\
                 \label{W eq} & = 2\rho_G - \Big(\bigoplus_{i=1}^L \lambda(i) + \bigoplus_{i=1}^{\frac{1}{2}(m_1p-n)} \lambda(i) \Big)
\end{align*}

By the bounds given for $m_1$, we know that $\frac{1}{2}(m_1p-n) <p$. Also, since $S^{\lambda(i)}$ and $S^{\lambda(j)}$ are equivalent whenever $i$ and $j$ have the same $p$-adic valuation, for our purposes we can rewrite
\begin{align*} \bigoplus_{i=1}^{\frac{1}{2}(m_1p-n)}\lambda(i) = \bigoplus_{i=p^k-\frac{1}{2}(m_1p-n)}^{p^k-1}\lambda(i).\end{align*}
This and the fact that $L = p^k - \frac{1}{2}m_1p$ give us that
\begin{align*} \bigoplus_{i=1}^L \lambda(i) + \bigoplus_{i=1}^{\frac{1}{2}(m_1p-n)} \lambda(i) \subset 2\overline{\rho_G}\end{align*}
Furthermore, since $L > \frac{p^k-p}{2}$ we know that 
\[\frac{p^k-1}{2} - L < \frac{p-1}{2} < p.\]
Thus, since $\overline{\rho_G} = \bigoplus_{i=1}^{\frac{p^k-1}{2}}\lambda(i)$ and 
\[L + \frac{1}{2}(m_1p-n) = p^k - \frac{n}{2} \geq \frac{p^k+1}{2}\]
we have that 
\begin{align}\label{eq2}\overline{\rho_G} \subset \bigoplus_{i=1}^L \lambda(i) + \bigoplus_{i=1}^{\frac{1}{2}(m_1p-n)} \lambda(i).\end{align}

Now we know we may write
\[W-t\rho_G = (2-t)\rho_G  - \big(\bigoplus_{i=1}^L \lambda(i) + \bigoplus_{i=1}^{\frac{1}{2}(m_1p-n)} \lambda(i)\big)\]
and we are considering $tp^k > n + \epsilon$. Since $n < p^k$, we are really just considering $t\geq 1$. If $t > 1$ then clearly $W-t\rho_G$ contains only negative cells. If $t = 1$, $W-t\rho_G$ also contains only negative cells by \ref{eq2}. In either case, considering the chain complex associated to $S^{W-t\rho_G}$ gives the result:
\[\xymatrixrowsep{1pc}\xymatrix{0 \ar[r] & \Z \ar[r]^1 & \Z \ar[r]^0 & \Z \ar[r]^p & \cdots\\ & 2-t & 1-t & -t & }\]
So by Theorem \ref{SV are slice} $S^W \wedge H\mf{\Z}$ is an $n$-slice when $n$ is even.

\underline{Case 2}: If $n$ is odd we write $\ell = \frac{1}{2}(n-3)p^k - \frac{1}{2}(p^k-m_1p)$. Since $n < p^k$, the largest odd $m_1$ is the next odd integer after $\frac{p^k-2}{p}$. So $m_1 \leq p^{k-1}$ and thus, $m_1p \leq p^k$. Since we also have $m_1p > 0$, this gives us that $0 \leq \frac{1}{2}(p^k - m_1p) < \frac{p^k}{2}$. So we may rewrite
\begin{align*} W & = (n-2-(n-3))\rho_G - \bigoplus_{i=1}^L \lambda(i) - \bigoplus_{i=1}^{\frac{1}{2}(m_1p-n)} \lambda(i) \\
                 & = \rho_G - \Big( \bigoplus_{i=1}^L \lambda(i) + \bigoplus_{i=1}^{\frac{1}{2}(m_1p-n)}\Big) \end{align*}
where $L = \frac{1}{2}(p^k - m_1p)$. Since $m_1 < \frac{n}{p}+2$ and $n \geq 3$, we have that
\[L < \frac{p^k-(n+2p)}{2} \leq \frac{p^k - 2p - 3}{2}.\]
Then since $\frac{1}{2}(m_1p-n) < p$ we have that
\[\bigoplus_{i=1}^L \lambda(i) + \bigoplus_{i=1}^{\frac{1}{2}(m_1p-n)} \lambda(i) \subset \overline{\rho_G}.\]
So $W \subset t\rho_G$ whenever $t \geq 1$ completing the proof for $S^W \wedge H\mf{\Z}$.

We now show that
  $S^{W'} \wedge H\mf{\Z}$ is an $n$-slice and assume $n$ is a multiple of $p$.
 By Lemma \ref{W + pk} we again need only consider the case when $2 < n \leq p^k$.

Recall that 
\[W'(n) = V_{(1,2)} + 1 - \bigoplus_{i=1}^{p-1}\lambda(i) -
\lambda(1)\]
or equivalently
\[W'(n) = (n-2)\rho_G - \bigoplus_{i=1}^{\ell} \lambda(i) -  \bigoplus_{i=1}^{p-1}\lambda(i) -
\lambda(1)\]
where now $\ell = \frac{1}{2}\big((n-2)p^k - m_2p\big)$. Since $n$ is divisible by $p$, we can write
\begin{align*} \ell & = \frac{1}{2}(n-2)(p^k - p) + \big(\frac{np-n}{2} - 2\big)p
                  = \frac{1}{2}(n-2)p^k - \frac{n}{2} - p. \end{align*}
We will again consider the cases where $n$ is even and $n$ is odd separately.

\underline{Case 1}: If $n$ is even we can write $\ell = \frac{1}{2}(n-4)p^k + p^k-\frac{n+2p}{2}$. This way we have
\[W' = 2\rho_G - \bigoplus_{i=1}^L \lambda(i) - \bigoplus_{i=1}^{p-1} \lambda(i) - \lambda(1) \]
where $L = p^k - \frac{n+2p}{2}$. Since $n$ is even and a multiple of $p$, then $2 < n \leq p^k - p$ and we can see that we have the following bound on $L$:
\[ \frac{p^k - p}{2} \leq L < p^k - (p+1). \]
By the upper bound, we see that
\[\bigoplus_{i=1}^L \lambda(i) + \bigoplus_{i=1}^{p-1} \lambda(i) + \lambda(1) \subset 2\rho_G - 2\]
so $W' \subset 2\rho_G$ and $W'^G = (2\rho_G)^G$.

By the lower bound on $L$, we see that 
\[\overline{\rho_G} \subset \bigoplus_{i=1}^L \lambda(i) + \bigoplus_{i=1}^{p-1}\lambda(i).\] Thus for all $t > \frac{n+\epsilon}{p^k}$, the $C_{p^k}$-fixed point part of the chain complex associated to $W'-t\rho_G$ is of the form
\[ \xymatrixrowsep{1pc}\xymatrix{0 \ar[r]&\Z \ar[r]^1 & \Z \ar[r]^0 & \Z \ar[r]^p & \cdots \\ & 2-t & 1-t & -t & }\]
Since for $n < p^k$ we are looking at $t\geq 1$, it is easy to see from the complex that
\[ [S^{-\epsilon} , S^{W'-t\rho_G} \wedge H\mf{\Z}] = 0 \]

\underline{Case 2}: When $n$ is odd, we have one extreme case: $n = p^k$. So we will deal with this case separately.

\underline{Case 2a}: $n < p^k$. In this case, we will write $l = \frac{1}{2}(n-3)p^k + \frac{p^k - n - 2p}{2}$ so we can see that 
\[W' = \rho_G - \bigoplus_{i=1}^L \lambda(i) - \bigoplus_{i=1}^{p-1}\lambda(i) - \lambda(1)\]
where $L = \frac{p^k-n-2p}{2}$. Since we've assumed that $n < p^k$, $n$ is odd, and $n$ is divisible by $p$, we know that $p \leq n \leq p^k-  2p$. So we get the following bound on $L$:
\[0 \leq L \leq \frac{p^k - 3p}{2}.\]
By the upper bound we have that $L + (p-1) + 1 \leq \frac{p^k-p}{2}$ so
\[\bigoplus_{i=1}^L \lambda(i) + \bigoplus_{i=1}^{p-1} \lambda(i) + \lambda(1) \subset \overline{\rho_G}.\]
Thus, $W' \subset \rho_G$ and $W'^G = (\rho_G)^G$. Also then $W' \subset t\rho_G$ for $t \geq 1$ and so
\[ [ S^{-\epsilon},S^{W'-t\rho_G}\wedge H\mf{\Z}] = 0. \]
Thus by Theorem \ref{SV are slice} this case is complete.

\underline{Case 2b}: When $n=p^k$ we have
\begin{align*} V_{(1,2)} + 1 - \bigoplus_{i=1}^{p-1}\lambda(i) - \lambda(1) & = (n-2)\rho_G -\bigoplus_{i=1}^{\ell} \lambda(i) - \bigoplus_{i=1}^{p-1}\lambda(i) - \lambda(1) \\
& = (p^k-2)\rho_G - \bigoplus_{i=1}^{\ell+p-1}\lambda(i) - \lambda(1)\end{align*}
since by Definition \ref{V(a,b)}
\begin{align*} \ell &= (p^k-2)\frac{p^k-p}{2} + \big(\frac{p^k-p^{k-1}}{2} - 2 \big)p \\
                    &= p^k\big(\frac{p^k-3}{2}\big) - p. \end{align*}
Thus, 
\[W'(n) = (p^k-2-(p^k-3))\rho_G + 2 - \lambda(1) = \rho_G + 2 -\lambda(1).\]
Now we also know by \cite{ESP} that $H\mf{\Z}^*$ is a $0$-slice and so $S^{\rho_G}\wedge H\mf{\Z}^*$ is a $p^k$-slice. Furthermore, $H\mf{\Z}^* = S^{2-\lambda}\wedge H\mf{\Z}$ and so we see that $S^{W'} \wedge H\mf{\Z}$ is also an $n$-slice in this case.
\end{proof}

\bibliographystyle{amsalpha}

\end{document}